\documentclass[a4paper]{article}

\usepackage[utf8]{inputenc}
\usepackage{hyperref}
\usepackage{amsmath,amssymb,amsthm}
\usepackage{tikz,graphics}
\usepackage{verbatim}

\usetikzlibrary{arrows,shapes}
\usetikzlibrary{trees}
\usetikzlibrary{matrix,arrows}
\usetikzlibrary{positioning}
\usetikzlibrary{calc,through}
\usetikzlibrary{decorations.pathreplacing}
\usepackage{pgffor}
\usetikzlibrary{decorations.pathmorphing}
\usetikzlibrary{decorations.markings}
\tikzset{snake it/.style={decorate, decoration={snake},draw}}
\tikzset{snake/.style={decorate,decoration={zigzag}}}

\newtheorem{theorem}{Theorem}[section]

\newtheorem{lemma}[theorem]{Lemma}
\newtheorem{proposition}[theorem]{Proposition}

\newtheorem{remark}[theorem]{Remark}

\numberwithin{equation}{section}

\title{Embedding perfectly balanced 2-caterpillar into its optimal hypercube}
\author{Rishikant Rajdeepak \\201521006@daiict.ac.in \and V. Sunita \\ v\_suni@daiict.ac.in}

\begin{document}
\maketitle

\begin{abstract}
A long-standing conjecture on spanning trees of a hypercube states that a balanced tree on $2^n$ vertices with maximum degree at most $3$ spans the hypercube of dimension $n$~\cite{havel1986}. In this paper, we settle the conjecture for a special family of binary trees. A $0$-caterpillar is a path. For $k\geq 1$, a $k$-caterpillar is a binary tree consisting of a path with $j$-caterpillars $(0\leq j\leq k-1)$ emanating from some of the vertices on the path. A $k$-caterpillar that contains a perfect matching is said to be perfectly balanced. In this paper, we show that a perfectly balanced $2$-caterpillar on $2^n$ vertices spans the hypercube of dimension $n$.
\end{abstract}
\smallskip
\noindent \textbf{Keywords.} caterpillar, embedding and hypercube.

\section{Introduction}

Graph embeddings have significant applications in designing interconnect networks for high performance computing systems. Among the several embedding problems, mapping binary trees into hypercubes have wider applications because of the computational structure of trees and various properties of hypercubes~\cite{choudum2016}. 

In 1984, Havel~\cite{havel1984} conjectured that an equibipartite binary tree on $2^n$ ($n\geq 1$) vertices is a spanning tree of the $n$-dimensional hypercube. It attained attention of researchers after Havel and Liebl~\cite{havel1986} proved the result for equibipartite binary caterpillars, wherein a caterpillar is a tree such that if all leaves are removed then the remaining subgraph is a path. Such caterpillars are also called one-legged caterpillars where legs are its leaves. In~\cite{bezrukov1998} it is shown that binary caterpillars with each leg of same parity is a subgraph of its optimal hypercube, where, for a graph on $m$ ($2^{n-1}<m\leq 2^n$) vertices, the hypercube of dimension $n$ is its optimal hypercube. This was generalized in~\cite{monien2018} for equibipartite binary caterpillars with legs of arbitrary length.

In this paper, we discuss about a special family of binary trees known as $k$-caterpillars, $k\geq 0$. We show that if a $2$-caterpillar on $2^n$ vertices has a perfect matching then it spans the $n$-dimensional hypercube.

\section{Preliminaries}

\subsection{Definitions and Notations}
A $0$-caterpillar is a path. For $k\geq 1$, a $k$-caterpillar is a binary tree consisting of a path with $j$-caterpillars $(0\leq j\leq k-1)$ emanating from some of the vertices on the path. The path is called the backbone and its vertices the backbone vertices of the $k$-caterpillar. A leg of the $k$-caterpillar is a $j$-caterpillar, $0\leq j\leq k-1$, originating from a backbone vertex, including the backbone vertex (see Fig.~\ref{fig0}). 

\begin{figure}
\centering
{
\begin{tikzpicture}[scale=.6]

	\begin{scope}[every node/.style={circle,draw,fill,inner sep=1}]
		\node (1) at (0,0) [label=above: $1$] {};
		\node (2) at (2,0) [label=above: $2$] {};
		\node (3) at (4,0) [label=above: $3$] {};

		\node (11) at (0,-1) {};
		\node (12) at (0,-2)  {};
		\node (13) at (0,-3) {};
		\node (14) at (0,-4) [label=right: $N^1$] {};
		\node (121) at (-.5,-2) {};
		\node (122) at (-1,-2) {};
		\node (123) at (-1.5,-2) {};
		\node (141) at (-.5,-4) {};
		\node (142) at (-1,-4) {};
		
		\node (21) at (2,-1) {};
		\node (22) at (2,-2) [label=below: $N^2$] {};
		\node (211) at (1.5,-1) {};
		\node (212) at (1,-1) {};
		
		\node (31) at (4,-1) {};
		\node (32) at (4,-2) {};
		\node (33) at (4,-3) [label=right: $N^3$] {};
		\node (311) at (3.5,-1) {};
		\node (321) at (3.5,-2) {};
		\node (322) at (3,-2) {};
	\end{scope}
		\draw[thick] (1) -- (3); \draw[thick] (1) -- (14); \draw[thick] (2) -- (22); \draw[thick] (3) -- (33); \draw[thick] (1) -- (14);\draw[thick] (12) -- (123); \draw[thick] (14) -- (142); \draw[thick] (21) -- (212); \draw[thick] (31) -- (311); \draw[thick] (32) -- (322);
\end{tikzpicture}
}
\caption{A $2$-caterpillar with a backbone of order $3$. It has three legs, each of which is a $1$-caterpillar.}
\label{fig0}
\end{figure}
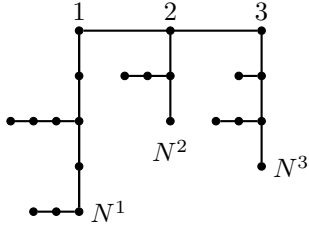

The order of a graph is its number of vertices, and its size is the number of edges. Let $C$ be a $k$-caterpillar on $m$ vertices and $N$ be the order of its backbone. We denote the $q^{th}$ leg of $C$ by $C^q$ and the order of the backbone of $C^q$ by $N^q$ (see Fig.~\ref{fig0}). Similarly, the $j^{th}$ leg in $C^q$ will be denoted by $C^{q,j}$ and the  order of the backbone of $C^{q,j}$ by $N^{q,j}$. 

\subsection{Properties of k-caterpillars and hypercubes}

A perfectly balanced graph is a graph with a perfect matching, i.e., the vertex set can be partitioned into pairs such that each pair is an edge. A tree has at most one perfect matching. A path of odd length is perfectly balanced. Deleting a non-matching edge in a perfectly balanced tree partitions the tree into two perfectly balanced subtrees. A path connecting two distinct vertices $x$ and $y$, denoted as $[x,y]$-path, is unique in a tree. 

A $k$-caterpillar is also a $j$-caterpillar for all $j\geq k$. A strictly $k$-caterpillar is a $k$-caterpillar which is not a $j$-caterpillar, for any $j\leq k-1$. A backbone of a $k$-caterpillar is not unique. It can be extended to another backbone of higher order by including the backbone of the first leg or that of the last leg (see Fig.~\ref{fig1}). Such extension reduces the order of the first leg or that of the last leg. The following result is imminent.

\begin{proposition}\label{prop:notunique}
If $C$ is a $k$-caterpillar, with $k\geq 1$, on $m$ vertices, then
\begin{enumerate}
	\item[(a)] a backbone of $C$ may not be unique,
	\item[(b)] there is a backbone  with the first leg and the last leg, each of order strictly less than $\frac{m}{2}\;(\mbox{for }m>2)$, 
	\item[(c)] if there is a backbone of order $2$ then $C$ is a $(k-1)$-caterpillar.
	
\end{enumerate}
\end{proposition}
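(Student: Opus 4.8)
The engine behind all three parts is the backbone-extension operation sketched just before the statement, which I would first pin down precisely. Let $B=b_1b_2\cdots b_N$ be a backbone of $C$ whose first leg $C^1$ (rooted at $b_1$) has order at least $2$. Since $b_1$ is an endpoint of $C^1$, a backbone of $C^1$ reads $b_1a_1\cdots a_s$ with $s\ge 1$, and I would replace $B$ by $B'=a_s\cdots a_1b_1b_2\cdots b_N$. The thing to check is that $B'$ is again a valid backbone: every leg now hanging off $a_1,\dots,a_s$ or off the new interior vertex $b_1$ is a leg of $C^1$, hence a $(\le k-2)$-caterpillar, while the legs at $b_2,\dots,b_N$ are untouched; all are $(\le k-1)$-caterpillars, so $B'$ witnesses $C$ as a $k$-caterpillar. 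Since $|B'|=N+s>N$ and the new first leg (at $a_s$) is a proper part of $C^1$, the operation strictly increases the backbone order and strictly shrinks the first leg, and symmetrically for the last leg. Part (a) is then immediate: the extension (or its reverse, contracting an end of the backbone into a leg) turns one backbone into a different one whenever an end leg allows it; e.g. $P_2$ admits both the order-$2$ backbone $b_1b_2$ and the order-$1$ backbone $\{b_1\}$.

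For (b) I would take a backbone $B$ of \emph{maximum} order $N$, which exists because $C$ has only finitely many paths. If the first leg $C^1$ had order at least $2$, then $b_1$ would have a neighbour $u\neq b_2$, and extending $B$ one step toward $u$ would give a valid backbone of order $N+1$, contradicting maximality. Hence $b_1$, and symmetrically $b_N$, is a leaf of $C$, so the first and last legs are the single vertices $\{b_1\}$ and $\{b_N\}$, each of order $1$. Since $m>2$ forces $1<\frac{m}{2}$, this backbone satisfies the conclusion with room to spare; only the weaker bound $<\frac{m}{2}$ is what I expect to be needed downstream.

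For (c), suppose $C$ has a backbone $b_1b_2$ of order $2$, with legs $C^1$ and $C^2$ rooted at $b_1$ and $b_2$; both are $(\le k-1)$-caterpillars. Writing a backbone of $C^1$ as $b_1a_1\cdots a_s$ and of $C^2$ as $b_2d_1\cdots d_t$, each with its root as an endpoint, I would splice them across the edge $b_1b_2$ into the single path $a_s\cdots a_1b_1b_2d_1\cdots d_t$. Every leg hanging off this path is a leg of $C^1$ or of $C^2$, hence a $(\le k-2)$-caterpillar, so the path is a backbone exhibiting $C$ as a $(k-1)$-caterpillar.

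The routine-looking but genuinely load-bearing step — and the one I expect to be the main obstacle — is the bookkeeping that re-labelling backbone and leg vertices keeps every leg within the permitted caterpillar order. I would isolate this as two preliminary observations: (i) a leg of a $k$-caterpillar is a $(\le k-1)$-caterpillar and its own legs are $(\le k-2)$-caterpillars, and, more basically, any subtree of a $j$-caterpillar is a $(\le j)$-caterpillar; and (ii) the vertex at which a leg attaches is an endpoint of that leg's backbone, which is exactly what lets the leg backbones concatenate into a genuine path in parts (a) and (c). The claw $K_{1,3}$ is the cautionary example that fixes (ii): it has \emph{no} backbone of order $2$, consistent with (c), precisely because forcing one would require attaching a leg at an interior (non-endpoint) vertex, which the definition forbids.
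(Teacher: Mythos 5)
Your proof is correct and follows essentially the same route as the paper: all three parts rest on the backbone-extension operation, with (b) obtained by passing to an extremal backbone whose first and last legs are single vertices (the paper iterates the extension until it stabilizes; you take a maximum-order backbone, which is the same extremal argument), and (c) by splicing the backbones of the two legs across the edge $b_1b_2$. You are in fact somewhat more careful than the paper in checking that the extended paths remain valid backbones and in pinning down the convention that a leg attaches at an endpoint of its own backbone.
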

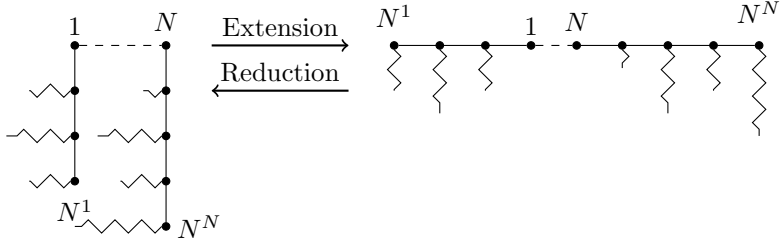
\begin{figure}[t]
\centering
\begin{tikzpicture}[scale=.6]
	\begin{scope}[every node/.style={circle,draw,fill,inner sep=1}]
		\node (1) at (0,0) [label=$1$] {};
		\node (2) at (2,0) [label=$N$] {};
		
		\node (11) at (0,-1) {};
		\node (12) at (0,-2) {};
		\node (13) at (0,-3) [label=below:$N^1$] {};
		
		\node (21) at (2,-1) {};
		\node (22) at (2,-2) {};
		\node (23) at (2,-3) {};
		\node (24) at (2,-4) [label=right:$N^N$] {};

		\draw[dashed] (1)--(2); \draw (1) -- (13); \draw (2)--(24);
		\draw[snake] (11) -- (-1,-1); \draw[snake] (12) -- (-1.5,-2); \draw[snake] (13) -- (-1,-3); \draw[snake] (21) -- (1.5,-1); \draw[snake] (22) -- (.5,-2); \draw[snake] (23) -- (1,-3); \draw[snake] (24) -- (0,-4);
	\end{scope}
	\begin{scope}[xshift=7cm,every node/.style={circle,draw,fill,inner sep=1}]
			\node (1) at (0,0) [label=$N^1$] {};
			\node (2) at (1,0) {};
			\node (3) at (2,0) {};
			\node (4) at (3,0) [label=$1$] {};
			\node (5) at (4,0) [label=$N$] {};
			\node (6) at (5,0) {};
			\node (7) at (6,0) {};
			\node (8) at (7,0) {};
			\node (9) at (8,0) [label=above:$N^N$] {};
		\draw (1) -- (4); \draw (5) -- (9); \draw[dashed] (4)--(5);
		\draw[snake] (1) -- (0,-1); \draw[snake] (2) -- (1,-1.5);\draw[snake] (3) -- (2,-1);\draw[snake] (6) -- (5,-.5);\draw[snake] (7) -- (6,-1.5);\draw[snake] (8) -- (7,-1);\draw[snake] (9) -- (8,-2);
	\end{scope}	
	\path[thick,->] (3,0) edge node[above] {Extension} (6,0);
	\path[thick,<-] (3,-1) edge node[above] {Reduction} (6,-1);
\end{tikzpicture}
\caption{Extension and reduction of a backbone in a $2$-caterpillar.}
\label{fig1}
\end{figure}

\begin{proof}
\begin{enumerate}
    \item[(a)] In Fig.~\ref{fig0}, the backbone of $C$ can be extended by including the backbone of its first leg. Thus, the backbone is not unique.
    \item[(b)] Let $B_0$ be a backbone of order $n_0$. Since $C$ is a $k$-caterpillar, its first leg $C^1$ is a $(k-1)$-caterpillar. If $C^1$ has one vertex, then part (b) of the proposition is true. If $C^1$ has more than one vertex, then by adding the backbone of $C^1$ to $B_0$, we obtain a backbone $B_1$ containing $B_0$. If $B_1$ has $n_1$ vertices, then $n_1>n_0$. Since $C^1$ is a $(k-1)$-caterpillar, the first leg of $C$ with the backbone $B_1$ is a $(k-2)$-caterpillar. By extending the backbone in this way, we obtain a sequence of backbones $B_0\subset B_1 \subset B_2\subset \dots$. Since, $C$ is finite there exists a maximal backbone $B_r$, for some $r\geq 1$. Clearly, the order of the first leg in $C$ with the backbone $B_r$ is one. Hence, part (b) of the proposition holds true.
    \item[(c)] Suppose $B$ is a backbone of $C$ of order 2, then by including the backbone of its only two legs, viz., $C^1$ and $C^2$, into $B$, we obtain a new backbone $B'$ of $C$. Since legs of $C^1$ and $C^2$ are $(k-2)$-caterpillars, the legs of $C$ with respect to the new backbone $B'$ are $(k-2)$-caterpillars. Therefore, $C$ is a $(k-1)$-caterpillar.
\end{enumerate}
\end{proof}

\begin{remark}
Consider a strictly 1-caterpillar $C$ on 6 vertices with the degree sequence (2,2,1,1,1,1). $C$ has a unique backbone of order 4, upto graph isomorphism.
\begin{figure}
\centering
\begin{tikzpicture}[scale=.6]
	\begin{scope}[every node/.style={circle,draw,fill,inner sep=1}]
		\node (1) at (0,0) [label=above:1] {};
		\node (2) at (1,0) [label=above:2] {};
		\node (21) at (1,-1) {};
		\node (3) at (2,0) [label=above:3] {};
		\node (31) at (2,-1) {};
		\node (4) at (3,0) [label=above:4] {};
    \end{scope}
    \draw (1)--(4); \draw (2) -- (21); \draw (3) -- (31);
\end{tikzpicture}
\caption{A strictly 1-caterpillar with unique backbone upto isomorphism.}\label{fig2}
\end{figure}
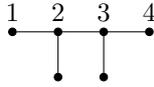
\end{remark}

A backbone of a $k$-caterpillar can also be reduced to a backbone of smaller order if there is a backbone vertex of degree $2$ and all legs before or after this vertex are at most $(k-2)$-caterpillars, in which case, the vertex becomes the first  or the last backbone vertex (see Fig.~\ref{fig1}). A strictly $2$-caterpillar has order at least $12$ (see Fig.~\ref{fig3} (a)), and if, in addition, it is perfectly balanced then the order is at least $16$ (see Fig.~\ref{fig3} (b)). 

\begin{figure}[t]
\centering
\begin{tikzpicture}[scale=.6]
	\begin{scope}[every node/.style={circle,draw,fill,inner sep=1}]
	\node (1) at (0,0) {};
	\node (2) at (2,0) {};
	\node (3) at (4,0) {};
	\node (12) at (0,-1) {};
	\node (13) at (0,-2) {};
	\node (122) at (-1,-1) {};
	\node (22) at (2,-1) {};
	\node (23) at (2,-2) {};
	\node (222) at (1,-1) {};
	\node (32) at (4,-1) {};
	\node (33) at (4,-2) {};
	\node (322) at (3,-1) {};
	
	\draw (1)--(3); \draw (1)--(13); \draw (2)--(23); \draw (3)--(33); \draw (12)--(122); \draw (22)--(222); \draw (32)--(322);
	
	\node[white] () at (2,-3.5) [label=(a)]{};
	\end{scope}
	
	\begin{scope}[xshift=8cm, every node/.style={circle,draw,fill,inner sep=1}]
	\node (1) at (0,0) {};
	\node (2) at (2,0) {};
	\node (3) at (4,0) {};
	\node (12) at (0,-1) {};
	\node (13) at (0,-2) {};
	\node (122) at (-1,-1) {};
	\node (22) at (2,-1) {};
	\node (23) at (2,-2) {};
	\node (222) at (1,-1) {};
	\node (32) at (4,-1) {};
	\node (33) at (4,-2) {};
	\node (322) at (3,-1) {};
	\node (1222) at (-1,-2) {};
	\node (2222) at (1,-2) {};
	\node (3222) at (3,-2) {};
	\node (34) at (5,-2) {};
	
	\draw (1)--(3); \draw (1)--(13); \draw (2)--(23); \draw (3)--(33); \draw (12)--(122); \draw (22)--(222); \draw (32)--(322); \draw[double] (122) -- (1222); \draw[double] (222) -- (2222); \draw[double] (322) -- (3222); \draw[double] (12) -- (13); \draw[double] (22) -- (23); \draw (32)--(33); \draw[double] (1) -- (2); \draw[double] (3) -- (32); \draw[double] (33) -- (34);
	
	\node[white] () at (2,-3.5) [label=(b)]{};
	\end{scope}
\end{tikzpicture}
\caption{(a) A strictly $2$-caterpillar and (b) a strictly perfectly balanced $2$-caterpillar. A matching edge is drawn as double line segment.}
\label{fig3}
\end{figure}
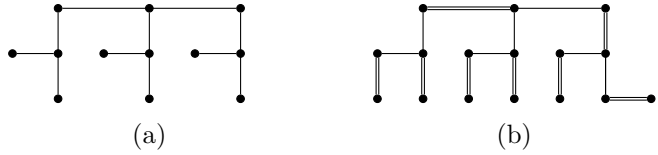

Consider a $k$-caterpillar on $m$ vertices with a backbone $B$ of order $N$. Suppose $x$ is the first backbone vertex and $y$ is any backbone vertex, we define $f_B(y)$ to be the order of the first $l$ legs of $C$, where $l$ is the order of the $[x,y]$-path. If the backbone vertices are labeled from $1$ to $N$, then we simply write $f_B(l)$. Clearly, $f_B$ is a strictly increasing function. We use this function to prove the following results. Unless explicitly specified, we assume that the backbone vertices are labeled from $1$ to $N$.

\begin{proposition}\label{prop:orderpartition}
Let $C$ be a $k$-caterpillar on $m>2$ vertices with a backbone $B$ of order $N$ such that $C^1$ and $C^N$ are both of order strictly less than $\frac{m}{2}$. Then, $\exists q$, with $1<q<N$, such that $f_B(q)\geq \frac{m}{2}$ and $f_B(q-1)<\frac{m}{2}$. 
\end{proposition}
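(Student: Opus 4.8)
The plan is to read this as a discrete intermediate-value statement about the strictly increasing function $f_B$, so that no case analysis on the caterpillar's structure is needed. The first step is to record the two boundary values of $f_B$ and rewrite the hypotheses in terms of them. Since $f_B(l)$ counts the vertices in the first $l$ legs and every vertex of $C$ lies in exactly one leg, we have $f_B(N)=m$. Moreover $f_B(1)$ is exactly the order of $C^1$, and the order of $C^N$ equals $f_B(N)-f_B(N-1)=m-f_B(N-1)$. Hence the assumption that $C^1$ and $C^N$ each have order strictly less than $\frac{m}{2}$ translates into the two inequalities $f_B(1)<\frac{m}{2}$ and $f_B(N-1)>\frac{m}{2}$.

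Next I would let $q$ be the smallest index in $\{1,\dots,N\}$ for which $f_B(q)\geq\frac{m}{2}$; this is well defined because $f_B(N)=m\geq\frac{m}{2}$. Minimality of $q$ gives $f_B(q-1)<\frac{m}{2}$ immediately, which is the second inequality required in the conclusion, and together with $f_B(1)<\frac{m}{2}$ it forces $q\geq 2$, i.e.\ $q>1$.

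It then remains to rule out $q=N$, and this is where the hypothesis on the \emph{last} leg does the work: if $q=N$ we would have $f_B(N-1)<\frac{m}{2}$, contradicting the inequality $f_B(N-1)>\frac{m}{2}$ obtained in the first step. Therefore $q<N$, and we conclude $1<q<N$ with $f_B(q)\geq\frac{m}{2}$ and $f_B(q-1)<\frac{m}{2}$, as claimed.

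I expect the only mildly delicate point to be the strict inequality $q<N$; it is precisely the \emph{strict} bound on the order of $C^N$ that delivers it, and as a byproduct the same argument forces $N\geq 3$ (otherwise $q=N$ could not be avoided), so the index range $1<q<N$ is genuinely nonempty. Everything else is routine bookkeeping on the monotone step function $f_B$.
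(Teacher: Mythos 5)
Your proof is correct and follows essentially the same route as the paper: a discrete intermediate-value argument on the strictly increasing function $f_B$, taking $q$ to be the first index where $f_B$ reaches $\frac{m}{2}$. In fact you are slightly more careful than the paper, whose one-line proof cites only $f_B(1)<\frac{m}{2}$ and $f_B(N)=m>\frac{m}{2}$ (which by themselves would only force $1<q\leq N$), whereas you correctly bring in the hypothesis on the order of $C^N$ --- equivalently $f_B(N-1)>\frac{m}{2}$ --- to secure the strict inequality $q<N$.
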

\begin{proof}
Since $f_N$ is a strictly increasing function with $f_B(1)<\frac{m}{2}$ and $f_B(N)=m>\frac{m}{2}$, we get the required result.
\end{proof}

\begin{proposition}\label{prop:nonmatchingedge}
Let  $C$ be a perfectly balanced $k$-caterpillar on $m>2$ vertices with a backbone $B$ of order $N$. If there exists $q$, with $1<q<N$, such that  $f_B(q)> \frac{m}{2}$ and $f_B(q-1)<\frac{m}{2}$, then we can deduce that $(q-1,q)$ is not a matching edge. 
\end{proposition}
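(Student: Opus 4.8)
The plan is to argue by contradiction, assuming that $(q-1,q)$ lies in the unique perfect matching $M$ of $C$, and to derive a contradiction with the size hypotheses $f_B(q-1)<\frac{m}{2}<f_B(q)$. The main tool will be a parity reformulation of the cited fact that deleting a non-matching edge of a perfectly balanced tree yields two perfectly balanced subtrees: applying it to a backbone edge $(i,i+1)$ shows that $(i,i+1)$ is a non-matching edge exactly when the component $C^1\cup\cdots\cup C^i$ is perfectly balanced, i.e. exactly when $f_B(i)$ is even. Thus the whole statement is equivalent to proving that $f_B(q-1)$ is even, and the contradiction should be extracted from assuming $f_B(q-1)$ odd.

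Concretely, I would first note that if $(q-1,q)\in M$ then vertex $q$ is matched to $q-1$, so $q$ is matched neither inside $C^q$ nor across the edge $(q,q+1)$. Deleting the matching edge $(q-1,q)$ therefore leaves $q-1$ unmatched in the component $C^1\cup\cdots\cup C^{q-1}$ of order $f_B(q-1)$; hence $f_B(q-1)$ is odd and, symmetrically, $|C^q|=f_B(q)-f_B(q-1)$ is odd, so $f_B(q)$ is even. Since $q<N$, the edge $(q,q+1)$ is then a non-matching edge, and the cited fact applied to it confirms that $C^1\cup\cdots\cup C^q$ is perfectly balanced of order $f_B(q)$. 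At this point the hypotheses give $f_B(q-1)\le \frac{m}{2}-1$ and $f_B(q)\ge \frac{m}{2}+1$, whence $|C^q|\ge 2$, and combined with the parity above $|C^q|\ge 3$; the leg $C^q$ is thus forced to be genuinely nontrivial.

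The final step is to turn the strict crossing of the threshold $\frac{m}{2}$ between $q-1$ and $q$ into a contradiction with $f_B(q-1)$ being odd, and this is the step I expect to be the main obstacle. A parity count alone does not close the argument: the usable global constraint is that, in a perfectly balanced tree, the backbone vertices whose legs have odd order (precisely those matched along the backbone) occur in maximal runs of even length, so their number in any prefix is tightly controlled. I would try to show that, because the accumulated leg-mass jumps strictly across $\frac{m}{2}$ at the single leg $C^q$, the prefix $C^1\cup\cdots\cup C^{q-1}$ cannot contain an odd number of odd-order legs without violating either this even-run structure or the strict inequalities; establishing this incompatibility rigorously is the crux on which the proposition rests.
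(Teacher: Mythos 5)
Your parity reformulation is correct and clean: for a backbone edge $(i,i+1)$ of a perfectly balanced tree, $(i,i+1)\in M$ if and only if $f_B(i)$ is odd, and from $(q-1,q)\in M$ you correctly derive that $f_B(q-1)$ is odd, that $o(C^q)$ is odd, and that $(q,q+1)\notin M$. But the ``crux'' you defer --- showing that $f_B(q-1)$ odd is incompatible with the strict crossing of $\frac{m}{2}$ --- cannot be established, because the proposition read literally is false. Take the $1$-caterpillar on $8$ vertices with backbone $1$--$2$--$3$ and legs the paths $1$--$a$--$b$, $2$--$c$--$d$ and $3$--$e$, of orders $3,3,2$. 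Its unique perfect matching is $\{(a,b),(c,d),(3,e),(1,2)\}$, and with $q=2$ one has $f_B(1)=3<4$ and $f_B(2)=6>4$, yet $(1,2)$ is a matching edge. So no amount of bookkeeping about runs of odd-order legs will close your argument; the incompatibility you are hunting for does not exist.

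What the paper actually proves (and what it later uses, as a ``without loss of generality'') is the weaker statement that \emph{after possibly reversing the orientation of the backbone} the crossing edge may be taken to be non-matching. The entire proof is a symmetry observation for which you already have every ingredient: if $(q-1,q)\in M$ then $(q,q+1)\notin M$, and since $m-f_B(q)<\frac{m}{2}<m-f_B(q-1)$, under the relabeling $i\mapsto N-i+1$ the threshold $\frac{m}{2}$ is crossed exactly at the image of the edge $(q,q+1)$, which is non-matching; in the counterexample above this is the edge $(2,3)$. Your attempt fails not for lack of a lemma but because you set out to prove the statement as literally written rather than in the up-to-reversal sense that the paper's own two-line proof (and its later application in the main theorem) makes clear is intended.
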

\begin{proof}
If $f_B(q)> \frac{m}{2}$ then $f_B(N)-f_B(q)< \frac{m}{2}$. So, if $(q-1,q)$ is not a matching edge, we are done, otherwise by reversing the labels of the backbone vertices, i.e. the backbone vertex $i\;(1\leq i\leq N)$ is relabeled by $N-i+1$, we get the required result. 
\end{proof}

A maximal backbone $B$ of a $k$-caterpillar $C$ is a backbone which can not be extended to a larger backbone of $C$ containing $B$. The first and the last legs in a maximal backbone are each of order one. If $C$ is of order $m=2$, then it has exactly two backbones, viz., (a) a backbone with one leg of order two and (b) a backbone with two legs, each of order one. In either case the first leg or the last leg can not have order strictly less than $\frac{m}{2}$, therefore, in the previous propositions we assumed $m>2$.

Suppose $q$ is a backbone vertex of a perfectly balanced $k$-caterpillar $C$. If the order of the leg $C^q$ is even then it is perfectly balanced, else $C^q\backslash\{q\}$, i.e., $q$ is removed from $C^q$, is  perfectly balanced. Moreover, if $C^q$ is of odd order then either $(q,q+1)$ or $(q-1,q)$ is a matching edge, in which case, $C^{q+1}$ or $C^{q-1}$ is of odd order. 

\begin{proposition}\label{prop:evenorderleg}
Let $C$ be a perfectly balanced $k$-caterpillar with $N$ backbone vertices and $M$ be its perfect matching. Suppose $(i,j)\in  M$ is a backbone edge with $1\leq i<j<N$, then, either the first leg of even order lies at an odd distance from $j$ or $N-1$ is at odd distance from $j$ with $(N-1,N)\in  M$.
\end{proposition}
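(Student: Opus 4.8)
The plan is to use the fact that a matching edge on the backbone forces the next backbone edge to be a non-matching one, and then to read off the parities of the legs lying to the right of $j$. Since a backbone edge joins consecutive vertices I first record that $j=i+1$, and that $C^i$ and $C^j$ both have odd order (their backbone vertices are matched to each other, so each leg minus its backbone vertex must be self-matched). In particular the first leg of even order occurs strictly to the right of $j$, which is the reading of ``the first leg of even order'' I shall use. Because $j$ is already matched to $i$, the backbone edge $(j,j+1)$ is not in $M$; deleting this non-matching edge splits $C$, by the property quoted earlier, into two perfectly balanced subtrees, and I focus on the right one $C_R$, with backbone $j+1,\dots,N$ and legs $C^{j+1},\dots,C^{N}$.

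I would then propagate parities through $C_R$ using the structural dichotomy recorded just before the proposition: an even-order leg is matched entirely within itself, whereas an odd-order leg $C^q$ forces its backbone vertex $q$ to match a backbone neighbour, whose leg is then also odd. The crucial point is that $j+1$ is the first vertex of $C_R$, so an odd leg there can only be matched rightward, to $j+2$; iterating, a maximal initial run of odd-order legs $C^{j+1},C^{j+2},\dots$ is matched in the consecutive pairs $(j+1,j+2),(j+3,j+4),\dots$, and this run is broken only by a self-matched even-order leg.

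This yields two cases. If some leg to the right of $j$ has even order, let $C^{p}$ be the first such; then $C^{j+1},\dots,C^{p-1}$ all have odd order and are matched in the consecutive pairs above, so their number $p-1-j$ is even, whence $p-j$ is odd and $C^p$ lies at odd distance from $j$, giving the first alternative. If instead every leg $C^{j+1},\dots,C^{N}$ has odd order, the same pairing exhausts the backbone up to $N$, forcing $N-j$ to be even and $(N-1,N)\in M$; then $N-1$ lies at distance $N-1-j$, which is odd, giving the second alternative.

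The step I expect to be the main obstacle is the parity propagation in $C_R$: making rigorous that the odd-order legs must pair up consecutively and from left to right. This rests on $j+1$ being a boundary vertex (so it cannot match back to $j$) together with even-order legs being genuinely self-matched, which is exactly what prevents a backbone matching edge from straddling an even leg. I would secure it either by induction on the backbone vertices of $C_R$ or by invoking uniqueness of the perfect matching on each maximal block of consecutive odd-order legs; once this is in place the parity bookkeeping in the two cases is routine.
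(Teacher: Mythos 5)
Your proposal is correct and follows essentially the same route as the paper: both arguments observe that the legs strictly between $j$ and the first even-order leg must all have odd order and hence be matched in consecutive backbone pairs $(j+1,j+2),(j+3,j+4),\dots$, which forces the parity claim, with the all-odd case giving $(N-1,N)\in M$. Your write-up is more explicit than the paper's (which compresses the pairing argument into ``alternate edges on the $[i,q]$-path are matching edges''), but no new idea is involved.
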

\begin{proof}
Let $C^q$ be the first leg of even order from $j$, i.e., $q>j$ is the minimum integer for which $C^q$ is of even order. Then $C^q$ is perfectly balanced and alternate edges on $[i,q]$-path are matching edges, with $(q-1,q)\notin M$, so $q-j$ is odd (see Fig.~\ref{fig4}). If no such path exists then $N-i$ is odd with $(N-1,N)\in M$.
\end{proof}

\begin{figure}[t]
\centering
\begin{tikzpicture}[scale=.6]

	\begin{scope}[every node/.style={circle,draw,fill,inner sep=1}]
		
		\node (1) at (1,0) [label=above: $C^1$] {};
		\node (i) at (3,0) [label=above: $C^i$] {};
		\node (j) at (4,0) [label=above: $C^j$] {};
		\node (k) at (6,0) [label=above: $C^q$] {};
		\node (k1) at (6,-1) {};
		
		\node[white] (kN) at (6,-3) {};
		\node (N) at (8,0) [label=above: $C^N$]{};
	\end{scope}
		\draw[dashed] (1)--(i); \draw[double] (i) -- (j);\draw[dashed] (j)--(k); \draw[dashed] (k) -- (N);
		\draw[double] (k)--(k1); 
		\draw[snake it] (1) -- (1,-1.5); \draw[snake it] (i) -- (3,-2.5); \draw[snake it] (j) -- (4,-2);\draw[snake it] (k1) -- (kN);  \draw[snake it] (N) -- (8,-2.5);
\end{tikzpicture}
\caption{$(i,j)$ is a matching edge drawn as double line segment. $C^q$ is the first leg of odd length from the backbone vertex $j$.}
\label{fig4}
\end{figure}
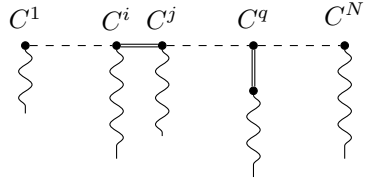

\begin{proposition}\label{prop:1cat_evenodd}
Let $C$ be a perfectly balanced $1$-caterpillar on $m\;(m\geq 2)$ vertices with a backbone $B$ of order $N$. Let $x$ be the first backbone vertex on $B$, then there exist backbones $B'$ and $B''$ of orders $N'$ and $N''$, respectively, both having $x$ as its first backbone vertex, such that
\begin{itemize}
	\item[(a)] the $[x,N']$-path is of even length and,
	\item[(b)] the $[x,N'']$-path is of odd length.
\end{itemize}
\end{proposition}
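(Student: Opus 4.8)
The plan is to start from the given backbone $B=(x=v_1,v_2,\ldots,v_N)$ and produce a second backbone, still rooted at $x$, whose order differs from $N$ by exactly one. The two backbone-paths then have lengths $N-1$ and $N-2$ (or $N-1$ and $N$), which are consecutive integers and hence of opposite parity; assigning the even one to $B'$ and the odd one to $B''$ finishes the proof. So the whole task reduces to modifying only the far end $v_N$, never touching $x$, so as to change the order by one while keeping a legitimate backbone of the $1$-caterpillar.

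First I would split on the local structure at $v_N$. If $v_N$ has a neighbour $w$ not lying on $B$ (equivalently, the last leg $C^{v_N}$ has order at least two), I would set $B'=(v_1,\ldots,v_N,w)$ and check it is a backbone of order $N+1$: the legs $C^{v_1},\ldots,C^{v_{N-1}}$ are untouched, the vertex $v_N$ now carries only the remaining pendant path as its leg, and $w$ carries whatever hangs beyond it; each of these is a path, i.e. a $0$-caterpillar, so $B'$ is a valid backbone. This is the partial-extension move indicated before Proposition~\ref{prop:orderpartition}.

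If instead $v_N$ is a leaf (its only neighbour is $v_{N-1}$, so the last leg is trivial), I would reduce: set $B'=(v_1,\ldots,v_{N-1})$, of order $N-1$. Here the point to verify is that $v_{N-1}$, now an endpoint, carries a legitimate leg — its old pendant leg together with the absorbed vertex $v_N$ forms a single path through $v_{N-1}$ (with $v_{N-1}$ either interior to or an end of that path), which is again a $0$-caterpillar. This is exactly the reduction described before Proposition~\ref{prop:orderpartition}.

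The main obstacle — really the only thing requiring care — is this verification that after the modification the end still yields legs that are genuine paths, since an endpoint of a backbone may have degree three and thus a leg in which that backbone vertex sits in the interior of a pendant path. I would therefore treat the degree-two and degree-three cases of $v_N$ (respectively $v_{N-1}$) separately, confirming in each that the resulting leg is a $0$-caterpillar. The degenerate small cases fit the same dichotomy and should be stated explicitly: when $N=1$ one simply extends into the single leg $C^1$, which is nontrivial because $m\geq 2$, and the case $m=2$ is covered likewise. I would note that the perfect-matching hypothesis is not actually invoked in this argument; it is inherited from the ambient setting in which the proposition is applied.
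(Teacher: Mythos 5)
Your extension move coincides with the paper's first case and is fine, and your reduction is valid for $N\geq 3$: there $v_{N-1}$ has two backbone neighbours, hence at most one pendant neighbour, so absorbing $v_N$ into its leg really does produce a path. The gap is at $N=2$ when $v_2$ is a leaf and $x$ has degree $3$. Then the leg of $x$ in $B$ is a path passing \emph{through} $x$ (two pendant branches meet at $x$), and after deleting $v_2$ from the backbone the would-be leg of $x$ in $B'=(x)$ contains $x$ with three neighbours ($v_2$ and the two branches); that is not a $0$-caterpillar, so $(x)$ is not a backbone of $C$ as a $1$-caterpillar. Your parenthetical ``with $v_{N-1}$ either interior to or an end of that path'' is exactly where this breaks: if $v_{N-1}$ is interior to its old leg, adjoining $v_N$ destroys the path property, so the case-check you promise would fail rather than succeed, and no alternative move is offered. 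Since extension is also unavailable here ($v_2$ is a leaf), neither of your two moves yields a backbone of the opposite parity. A concrete instance: take $x$ adjacent to a leaf $v_2$ and to two paths $a$--$a'$ and $b$--$b'$; this is perfectly balanced, $B=(x,v_2)$ is a backbone, and the required even backbone is $(x,a,a')$ --- obtained by rerouting through a branch at $x$, not by your deletion.

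This is also why your closing remark is wrong: the perfect-matching hypothesis is genuinely used. In $K_{1,3}$ with $x$ the centre (which has no perfect matching) every backbone having $x$ as its first vertex has order exactly $2$, so no even $[x,N']$-path exists and the statement fails. The paper's proof of the trivial-last-leg case observes that the matching forces $(N-1,N)\in M$, hence $C^{N-1}$ has odd order, and then reroutes the backbone down a pendant branch of $v_{N-1}$, pushing $v_N$ into that vertex's leg; the matching guarantees such a branch of even, positive length when $v_{N-1}=x$ has degree $3$, so the parity flips while $x$ remains first. Adding this rerouting case --- your deletion already covers $N\geq 3$ --- closes the gap.
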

\begin{proof}
The result can be obtained by extending and reducing the backbone $B$. Suppose the order of $B$ is odd. If the order of the last leg $C^N$ is greater than $1$, then $B$ can be extended to a backbone of even order by including the second vertex on $C^N$. If the order of $C^N$ is one, then the order of $C^{N-1}$ is odd. Exchange the edge $(N-1,N)$ by the path $C^{N-1}$ to form a new backbone of even order. By the similar approach we get a backbone of odd order, if $B$ were of even order.
\end{proof}

\begin{remark}
We can apply Propositon~\ref{prop:1cat_evenodd} on a leg of a $2$-caterpillar. However, if a leg is of order $2$, then the leg has unique backbone, which is of odd length.
\end{remark}

A hypercube $\mathcal{Q}_n$ of dimension $n$ is a graph with the vertex set $\mathbb{Z}_2^n$ and two vertices being adjacent if and only if the Hamming distance between them is exactly one. We use the following properties of the hypercube in this paper.
\begin{lemma}~\cite{choudum2016}\label{lem:choudum2016}
A hypercube of dimension $n\geq 1$, is
\begin{enumerate}
	\item[1.] $K_1$ symmetric, i.e., vertex symmetric,
	\item[2.] $K_2$ symmetric, i.e., edge symmetric,
	\item[3.] $K_{1,2}$ symmetric for $n\geq 2$, i.e., $P_3$ symmetric,
	\item[4.] $K_{1,3}$ symmetric for $n\geq 3$, i.e., claw symmetric
	\item[5.] $C_4$ symmetric for $n\geq 2$.
\end{enumerate}
where $K_n$ is a complete graph, $P_n$ is a path and $C_n$ is a cycle, all on $n$ vertices, and $K_{p,q}$ is a complete bi-partite graph with parts of order $p$ and $q$.
\end{lemma}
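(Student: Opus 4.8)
The plan is to work entirely with automorphisms of $\mathcal{Q}_n$ coming from two families of distance-preserving maps: the \emph{translations} $x \mapsto x + a$ for $a \in \mathbb{Z}_2^n$, and the \emph{coordinate permutations} $x \mapsto (x_{\sigma^{-1}(1)}, \dots, x_{\sigma^{-1}(n)})$ for $\sigma \in S_n$. Each of these preserves Hamming distance and hence is an automorphism; together they generate a subgroup of $\operatorname{Aut}(\mathcal{Q}_n)$ isomorphic to $\mathbb{Z}_2^n \rtimes S_n$ (in fact the whole group, but I only need that these particular maps are automorphisms). The translations act sharply transitively on vertices, since $x \mapsto x + (u+v)$ sends $u$ to $v$; this already gives part (1). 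For the remaining parts the uniform strategy is to use a translation to move a canonical vertex of the given copy of $H$ to the origin $0$, and then exploit that the coordinate permutations fix $0$ and act on its neighbour set $\{e_1, \dots, e_n\}$ exactly as $S_n$ acts on $\{1, \dots, n\}$.

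Carrying this out, I first note that the neighbours of $0$ are precisely the standard basis vectors $e_1, \dots, e_n$. For part (2) I translate one endpoint of each given edge to $0$, so the other endpoint becomes some $e_i$; the transposition $\sigma = (i\; j)$ then maps $e_i$ to $e_j$, proving edge-transitivity for $n \geq 1$. For part (3) the centre of a $P_3$ is its unique degree-$2$ vertex, so I send it to $0$ and the two ends become distinct basis vectors $e_i, e_j$; since $S_n$ is transitive on unordered (indeed ordered) pairs of distinct indices once $n \geq 2$, any $P_3$ maps to any other. Part (4) is identical, sending the centre of the claw to $0$ and its three leaves to distinct $e_i, e_j, e_k$, where transitivity of $S_n$ on triples of distinct indices requires $n \geq 3$.

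Part (5) is the one requiring an extra structural observation, and I expect it to be the main obstacle. Before applying the group I would classify the $4$-cycles of $\mathcal{Q}_n$. Traversing a cycle $a - b - c - d - a$ flips four coordinates $i_1, i_2, i_3, i_4$ whose indicator vectors must sum to $0$ in $\mathbb{Z}_2^n$. Four distinct basis vectors sum to a vector with four ones, a single repeated pair leaves two ones, and a triple leaves two ones, so the sum vanishes only when the flips form two equal pairs; the no-backtrack condition ($i_1 \neq i_2$, $i_2 \neq i_3$, etc., forced by the four vertices being distinct) then makes the pairs alternate, $i_1 = i_3 = i$ and $i_2 = i_4 = j$ with $i \neq j$. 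Hence every $4$-cycle is a ``coordinate square'' $\{v,\, v+e_i,\, v+e_i+e_j,\, v+e_j\}$ determined by a base vertex $v$ and two distinct coordinates $i, j$.

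Given this classification, $C_4$-symmetry follows by the same two-step recipe: I translate the base vertex $v$ of the first square to $0$, translate the base vertex of the second square to $0$ as well, and then apply the coordinate permutation carrying the pair $\{i, j\}$ of the first square to the coordinate pair of the second. This is available precisely when $n \geq 2$, which matches the stated hypothesis and completes part (5). The only genuinely nonroutine ingredient is the cancellation argument in $\mathbb{Z}_2^n$ that rules out all $4$-cycles other than coordinate squares; everything else reduces to transitivity of $S_n$ on small index-configurations, handled uniformly by the stabiliser of the origin.
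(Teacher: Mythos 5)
The paper gives no proof of this lemma at all --- it is imported wholesale from the cited reference --- so there is nothing internal to compare against; what you have written is a correct, self-contained proof of a fact the authors take on faith. Parts (1)--(4), via translating a canonical vertex to $0$ and letting $S_n$ act on the neighbour set $\{e_1,\dots,e_n\}$, are exactly the standard argument, and your classification of $4$-cycles as coordinate squares $\{v,\,v+e_i,\,v+e_i+e_j,\,v+e_j\}$ is the right key step for (5); the parity computation in $\mathbb{Z}_2^n$ correctly eliminates every other flip pattern, and the one case your list of sums omits (all four flips equal) is indeed killed by the no-backtrack condition you invoke. The only thing I would ask you to make explicit is driven by how the lemma is actually used later in the paper: in Cases 2, 3.1.1, 3.2.1 and so on the authors need not merely \emph{some} automorphism carrying one copy of $H$ onto another, but one realizing a prescribed vertex correspondence (e.g.\ $\pi_1\circ\phi_1(x_1)=\pi_2\circ\phi_2(x_2)$ and simultaneously $\pi_1\circ\phi_1(y_1)=\pi_2\circ\phi_2(y_2)$). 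Your normalization already delivers this stronger form for (1)--(4), since $S_n$ induces every permutation of the relevant basis vectors and hence every isomorphism of the normalized configurations; for (5) you should add one line observing that the coordinate swap $(1\;2)$ and translation by $e_1$ together generate the full dihedral group of order $8$ on the normalized square $\{0,e_1,e_1+e_2,e_2\}$, so any prescribed isomorphism between two $4$-cycles extends to an automorphism of $\mathcal{Q}_n$.
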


\section{The Embedding}

An embedding is an injective graph homomorphism. In this paper, we show that any perfectly balanced $2$-caterpillar on $m$ vertices, where $2^{n-1}<m\leq 2^n$ ($n\geq 1)$, is embeddable into a hypercube of dimension $n$. It is sufficient to show that a perfectly balanced $2$-caterpillar on $2^n$ vertices span the hypercube of dimension $n$. 

\begin{theorem}
Let $C$ be a perfectly balanced $2$-caterpillar on $2^n\;(n\geq 1)$ vertices. Then $C$ is a subgraph of the hypercube $\mathcal{Q}_n$ of dimension $n$.
\end{theorem}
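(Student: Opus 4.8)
The plan is to induct on $n$. Write $\mathcal{Q}_n$ as the union of two subcubes $\mathcal{Q}_n^{0}$ and $\mathcal{Q}_n^{1}$, each isomorphic to $\mathcal{Q}_{n-1}$, together with the perfect matching of direction-$n$ edges joining every vertex of $\mathcal{Q}_n^{0}$ to its mirror image in $\mathcal{Q}_n^{1}$. To embed $C$ it then suffices to split $C$ into two perfectly balanced $2$-caterpillars $C_1$ and $C_2$, each on $2^{n-1}$ vertices, joined by a single non-matching edge $(u,v)$; embed $C_i$ into $\mathcal{Q}_n^{i-1}$ by the inductive hypothesis; and finally use the symmetry of the hypercube to align the two embeddings so that $u$ and $v$ land on a matched pair. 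The small cases are disposed of directly: a perfectly balanced strictly $2$-caterpillar has at least $16=2^4$ vertices, so for $n\leq 3$ the tree $C$ is a path or a perfectly balanced $1$-caterpillar and the claim follows from the caterpillar results cited in the introduction.

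Locating the cut is where the preliminary propositions do their work. Since a perfectly balanced $C$ on $m>2$ vertices has, by Proposition~\ref{prop:notunique}(b), a backbone whose first and last legs each have fewer than $\frac{m}{2}$ vertices, Proposition~\ref{prop:orderpartition} supplies a backbone vertex $q$ with $f_B(q)\geq \frac{m}{2}$ and $f_B(q-1)<\frac{m}{2}$. In the favourable case $f_B(q-1)=2^{n-1}$: the legs $C^1,\dots,C^{q-1}$ already account for exactly half the vertices, the backbone edge $(q-1,q)$ is non-matching by Proposition~\ref{prop:nonmatchingedge}, and deleting it splits $C$ into two perfectly balanced $2$-caterpillars of order $2^{n-1}$ each, which are then handed to the inductive hypothesis.

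The main obstacle is the remaining case, where $f_B(q-1)<2^{n-1}<f_B(q)$, so that the leg $C^q$ straddles the midpoint and no backbone edge cuts $C$ exactly in half. Here I would keep the prefix $C^1,\dots,C^{q-1}$ on one side and transfer just enough of $C^q$, together with the backbone vertex $q$, to bring that side up to $2^{n-1}$ vertices. Since $C^q$ is itself a $1$-caterpillar, I would exploit the freedom in choosing its backbone (Proposition~\ref{prop:notunique}) and its parity (Proposition~\ref{prop:1cat_evenodd}, together with the discussion following Proposition~\ref{prop:evenorderleg} on when a leg is perfectly balanced) to carve $C^q$ along a non-matching edge into two perfectly balanced pieces of the required sizes, so that both resulting subtrees remain $2$-caterpillars and the cut edge is non-matching. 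Controlling all these constraints simultaneously --- exact order $2^{n-1}$, perfect balance of each side, a non-matching cut edge, and the $2$-caterpillar shape --- is the technical heart of the argument and will require a careful case analysis according to the parity and the matching status of the straddling leg.

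To glue the two embeddings I would invoke the symmetries in Lemma~\ref{lem:choudum2016}. Having embedded $C_1$ in $\mathcal{Q}_n^{0}$ and $C_2$ in $\mathcal{Q}_n^{1}$, vertex- and edge-transitivity of $\mathcal{Q}_{n-1}$ let me translate the embedding of $C_2$ so that the endpoint $v\in C_2$ of the cut edge sits directly across the direction-$n$ matching from the endpoint $u\in C_1$; the matching edge $uv$ then realises the deleted non-matching edge of $C$, and the union is a spanning copy of $C$ in $\mathcal{Q}_n$. The higher symmetries ($P_3$-, claw-, and $C_4$-symmetry) would be needed when the inductive splitting must respect a small configuration at the cut rather than a single edge --- for instance when the straddling leg forces $u$ to have prescribed neighbours on both sides --- which is a further reason the straddling-leg case is the delicate one.
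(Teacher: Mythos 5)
Your overall strategy---induct on $n$, cut $C$ into two perfectly balanced halves of order $2^{n-1}$, embed each half in a subcube, and align with hypercube symmetries---is the same skeleton the paper uses, and your favourable case ($f_B(q-1)=2^{n-1}$, delete one non-matching backbone edge) matches the paper's Case 1. But there is a genuine gap in the straddling case, and it is precisely where the paper has to do all its work. When $f_B(q-1)<2^{n-1}<f_B(q)$ you cannot "transfer just enough of $C^q$ together with $q$" across a single cut edge: the surplus part of $C^q$ is attached to the rest of $C$ only through the backbone of $C^q$, so to move it to the prefix side you must delete a second edge (and, if the surplus boundary falls inside a leg of $C^q$, a third), producing three or four components. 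The paper then reassembles some of these components into a new tree $Y$ by \emph{adding} edges not present in $C$, and the glued embedding must simultaneously realise two or three deleted edges of $C$ as cross-edges between the subcubes. Vertex- and edge-transitivity only let you align one vertex or one adjacent pair; to align two or three prescribed pairs you need the $P_3$-, $K_{1,3}$- and $C_4$-symmetries \emph{and}, crucially, a guarantee that the embeddings of $X$ and $Y$ send the designated vertices onto one of those fixed patterns. The bare statement you are inducting on provides no such guarantee, so your inductive step does not close. This is exactly why the paper proves a strengthened theorem in which the induction hypothesis itself asserts that three or four marked vertices $(x,y,z,\alpha)$ are mapped onto a prescribed $P_3$, claw, or $4$-cycle depending on the parities of the $[x,y]$- and $[y,z]$-paths; you gesture at the higher symmetries in your last paragraph but never strengthen the hypothesis, which is the missing idea.

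A smaller point: your base-case disposal covers only $n\leq 3$. Since a strictly perfectly balanced $2$-caterpillar can have exactly $16$ vertices, the case $n=4$ is not reducible to the cited $1$-caterpillar results and must be handled separately (the paper checks $n=4$, and the base case $n=5$ of its strengthened theorem, by exhaustive verification).
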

\begin{proof}
For $n\leq 3$, there are exactly $6$ perfectly balanced $2$-caterpillars, viz., two perfectly balanced $0$-caterpillars on $2$ and $4$ vertices, and four perfectly balanced $1$-caterpillars on $8$ vertices. Each of them are embeddable into the respective optimal hypercubes. For $n=4$, we have, using the brute force method, verified that the theorem holds. For $n\geq 5$, we prove a stronger result as given in the following theorem.
\end{proof}

The following theorem states that a perfectly balanced $2$-caterpillar, on at least $32$ vertices, is embeddable into its optimal hypercube, with at most four fixed vertices being mapped to some fixed graph patterns, where $K_1,\; K_2,\; K_{1,2}$, $K_{1,3}$ and $C_4$ are among the fixed graph patterns.

\begin{theorem}
For $n\geq 5$, let $C$ be a perfectly balanced $2$-caterpillar on $2^n\;(n\geq 1)$ vertices with a backbone $B$ on $N$ vertices. Let $x$ be the first backbone vertex on $C$, $y$ be the $j^{th}\;(2\leq j\leq N^1)$ backbone vertex on $C^1$, $z$ be the end vertex on the path $C^{1,j}$ and $\alpha$ be the $(j-1)^{th}$ vertex, if it exists, on $C^1$ (see Fig.~\ref{fig5} (a)). Then, there exists an embedding $\phi$ of $C$ into $\mathcal{Q}_n$ such that $\phi(\{x,y,z\})$ induces some fixed graph patterns (see Fig.~\ref{fig5} (b)), viz.,
\begin{enumerate}
\item[1.] if $[x,y]$-path is of odd length and
		\begin{itemize}
			\item[(i)] if $[y,z]$-path is of odd length then the sequence $[\phi(x),\phi(y),\phi(z)]$ is $P_3$.
			\item[(ii)] else, if $[y,z]$-path is of even length then the sequence $[\phi(y),\phi(x),\phi(z)]$ is $P_3$.
		\end{itemize}

\item[2.] else, $[x,y]$-path is of even length, in which case,
		\begin{itemize}
			\item[(i)] if $[y,z]$-path is of even length then $\{\phi(x),\phi(y),\phi(\alpha),\phi(z)\}$ induces a claw $K_{1,3}$, with $\phi(\alpha)$ as its central vertex.
			\item[(ii)] else if $[y,z]$-path is of odd length then the sequence  $[\phi(x),\phi(\alpha),\phi(y),\linebreak\phi(z),\phi(x)]$ is $C_4$.
		\end{itemize}
\end{enumerate}
\end{theorem}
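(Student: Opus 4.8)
\section*{Proof proposal}

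The plan is to argue by induction on $n$, taking the brute-force verification for $n=4$ recorded in the previous theorem as the base case and the present statement in dimension $n-1$ as the induction hypothesis. Throughout I will view $\mathcal{Q}_n$ as the disjoint union of two subcubes $\mathcal{Q}_{n-1}^{0}$ and $\mathcal{Q}_{n-1}^{1}$, obtained by fixing the last coordinate, together with the perfect matching that flips that coordinate; each $\mathcal{Q}_{n-1}^{i}$ is an induced subgraph of $\mathcal{Q}_n$, so any pattern realised inside one subcube is realised, with no extra edges, in $\mathcal{Q}_n$. The embedding of $C$ will be assembled by embedding one ``half'' of $C$ into each subcube and routing a single deleted edge across the matching.

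The substantive step is to halve the caterpillar. Since $C$ is perfectly balanced with $m=2^n>2$, Proposition~\ref{prop:notunique}(b) lets me fix a backbone whose first and last legs each have order $<\tfrac{m}{2}$, and Proposition~\ref{prop:orderpartition} then supplies an index $q$ with $f_B(q)\geq\tfrac{m}{2}>f_B(q-1)$. If $f_B(q)=\tfrac{m}{2}$, the backbone edge at $q$ already partitions $C$ into two blocks of order $2^{n-1}$; otherwise the halving boundary falls inside the leg $C^q$, which is a $1$-caterpillar, and I would use Proposition~\ref{prop:1cat_evenodd} to select a sub-backbone of $C^q$ of the needed parity and delete an edge inside $C^q$ trimming it to the exact count (cf.\ the extension/reduction of Fig.~\ref{fig1}). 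In either case the parity statements of Propositions~\ref{prop:nonmatchingedge} and~\ref{prop:evenorderleg} are what guarantee that the deleted edge can be taken to be a \emph{non-matching} edge; deleting a non-matching edge of a perfectly balanced tree, as noted in the Preliminaries, leaves two perfectly balanced subtrees. One then checks that the two pieces $C_L$ and $C_R$ are each a perfectly balanced $2$-caterpillar on exactly $2^{n-1}$ vertices. Because $C^1$ has order $<\tfrac{m}{2}$, the designated vertices $x,y,z,\alpha$ all lie in $C_L$, and the parities of the $[x,y]$- and $[y,z]$-paths are the same whether computed in $C$ or in $C_L$.

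Next I would invoke the induction hypothesis twice. Applied to $C_L$ with the same designated vertices $x,y,z$ (and $\alpha$), it yields an embedding $\phi_L\colon C_L\hookrightarrow\mathcal{Q}_{n-1}^{0}$ realising exactly the pattern dictated by the two parities; applied to $C_R$, with the endpoint $w$ of the deleted edge taken as the first backbone vertex together with a suitable choice of its designated neighbours, it yields $\phi_R\colon C_R\hookrightarrow\mathcal{Q}_{n-1}^{1}$. Setting $\phi=\phi_L\cup\phi_R$ makes $\phi$ injective and edge-preserving on every edge internal to $C_L$ or $C_R$, and it realises the required $x,y,z$-pattern since $\phi_L$ does. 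The only remaining edge is the deleted edge $(u,w)$ with $u\in C_L$, which must map to a cross edge of the matching. Here the symmetry of $\mathcal{Q}_{n-1}$ is the engine: Lemma~\ref{lem:choudum2016} tells us that the admissible patterns $K_1,K_2,K_{1,2},K_{1,3},C_4$ are precisely the subgraphs on which the hypercube acts transitively, so $\phi_L$ can be placed with its corner pattern in the prescribed orientation, after which I read off $p=\phi_L(u)$ and, using vertex-transitivity, translate $\phi_R$ within $\mathcal{Q}_{n-1}^{1}$ so that $\phi_R(w)=p'$, the match of $p$. Then $(u,w)$ maps to the cross edge $pp'$, completing the embedding.

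The main obstacle is not any single geometric idea but the bookkeeping that makes the halving and the patterning simultaneously realisable. One must match the four parity cases of the statement against the subcases for where the boundary falls (at a backbone edge versus inside $C^q$, with $C^q$ of even or odd order), and in each branch verify both that the resulting $C_L$ and $C_R$ are genuinely perfectly balanced $2$-caterpillars of order exactly $2^{n-1}$ with the deleted edge non-matching, and that the first-leg structure of $C_L$ forces precisely the pattern demanded for $C$. Checking this compatibility across every branch, together with the small adjustments needed when the halving boundary splits a leg of order $2$ (whose backbone is forced, as remarked after Proposition~\ref{prop:1cat_evenodd}), is where the real effort lies.
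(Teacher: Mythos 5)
There is a genuine gap at the heart of your plan: you assume that $C$ can be halved into two perfectly balanced $2$-caterpillars of order $2^{n-1}$ by deleting a \emph{single} edge. A tree in general admits no edge whose removal splits it into two equal halves, and the case analysis in the paper shows this failure occurs here too. When $f_B(q)>\tfrac{m}{2}$ the halving boundary falls strictly inside the leg $C^q$; deleting one edge inside that leg severs only a tail of $C^q$ (which is typically far smaller than $2^{n-1}$ vertices), while everything else --- including the legs $C^1,\dots,C^{q-1}$ you wanted on the other side --- stays in one component. To carve out a piece of order exactly $2^{n-1}$ you must delete the backbone edge \emph{and} an edge inside $C^q$ (and possibly a third edge inside a leg of $C^q$), which leaves the complementary part disconnected; the paper then reattaches the fragments $X_2,Y_2,Z_2$ with \emph{new} edges, chosen so that the cut endpoints $x_2,y_2,z_2$ sit on one of the fixed patterns of the reassembled caterpillar $Y$, before the induction hypothesis can be invoked. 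Your proposal has no mechanism for this reassembly, nor for the case $f_B(q)=2^{n}+1$ of the paper, where the cut edge is forced to be a matching edge and a second matching edge must be deleted and a new one added to restore perfect balance on both sides.

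This omission also explains why your argument never actually uses the strengthened conclusion being proved. Since up to three edges cross the partition, three pairs of endpoints must be identified across the two subcubes simultaneously, and that is precisely why the theorem tracks the patterns $P_3$, $K_{1,3}$ and $C_4$ on $\{x,y,z,\alpha\}$ and why Lemma~\ref{lem:choudum2016} lists $K_{1,2}$-, $K_{1,3}$- and $C_4$-symmetry rather than just vertex- and edge-transitivity. In your scheme only one cross edge ever appears, so vertex-transitivity alone would suffice and the induction would be proving a statement far stronger than it needs --- a strong sign that the decomposition step is where the real difficulty has been skipped. The final paragraph of your proposal correctly identifies the bookkeeping as the hard part, but the preceding construction does not produce the objects that the bookkeeping is supposed to track.
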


\begin{proof}
We prove by induction on $n$. For the base case of $n=5$, the result can be verified by the brute force method. We now proceed with the induction step.

Assume that any perfectly balanced $2$-caterpillar on $2^n\;(n\geq 5)$ vertices is embeddable into $\mathcal{Q}_n$ with an embedding satisfying the conditions $1$ or $2$, as given in the theorem. Let $C$ be a perfectly balanced $2$-caterpillar on $2^{n+1}$ vertices. Let $M$ be the perfect matching of $C$. Without loss of generality, assume that the order of the backbone of $C$ is $N\geq 3$ and the order of $C^1$ and $C^N$ are both strictly less than $2^n$ (by Proposition 1). The proof that $C$ has an embedding $\phi$ into $\mathcal{Q}_{n+1}$, as required, is exhibited by performing the following two steps.

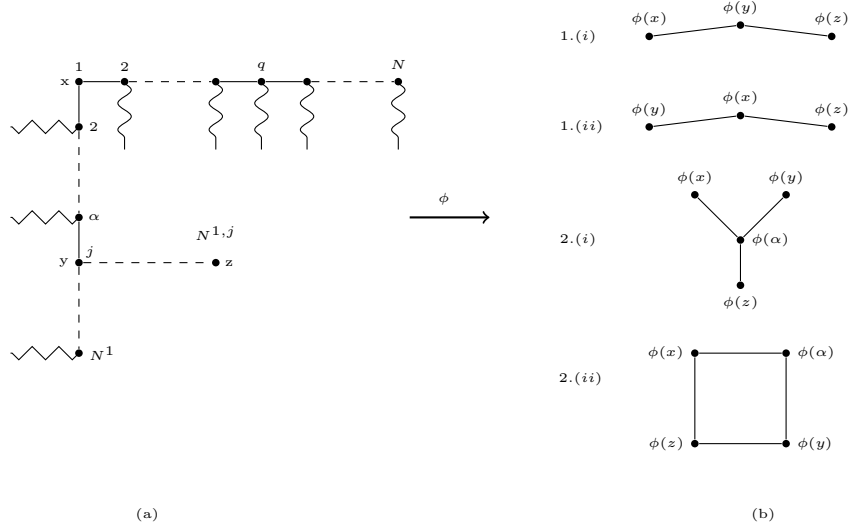
\begin{figure}
\centering
\tiny
\begin{tikzpicture}[scale=.3]
	\begin{scope}[every node/.style={circle,fill,inner sep=1}]
		\node (1) at (0,0) [label=$1$,label=left:x]{};
		\node (2) at (2,0) [label=$2$] {};
		\node (q-1) at (6,0) {};
		\node (q) at (8,0) [label=$q$] {};
		\node (q+1) at (10,0) {};
		\node (N) at (14,0) [label=$N$] {};
		\node (21) at (0,-2) [label=right:$2$] {};
		\node (j-1) at (0,-6) [label=right:$\alpha$] {};
		\node (j) at (0,-8) [label=above right: $j$,label=left:y] {};
		\node (N1) at (0,-12) [label=right: $N^1$] {};
		\node (N1j) at (6,-8) [label=above: $N^{1,j}$,label=right: z] {};	
		
		\draw (1) -- (2); \draw[dashed] (2) -- (q-1);\draw (q-1)--(q)--(q+1); \draw[dashed] (q+1)--(N);
		\draw (1)--(21); \draw[dashed] (21)--(j-1); \draw (j-1)--(j); \draw[dashed] (j)--(N1j); \draw[dashed] (j)--(N1);
		
		\draw[snake it] (2,0) -- (2,-3); \draw[snake it] (10,0) -- (10,-3); \draw[snake it] (8,0) -- (8,-3);\draw[snake it] (14,0) -- (14,-3); \draw[snake it] (6,0) -- (6,-3); \draw[snake] (0,-2) -- (-3,-2); \draw[snake] (0,-6) -- (-3,-6); \draw[snake] (0,-12) -- (-3,-12);
		
	\end{scope}
	
	\draw[->,thick] (14.5,-6) -- (18,-6);
	\node () at (16,-6) [label=above:$\phi$] {};
	
	\begin{scope}[xshift=24cm,yshift=2cm]
		\node[circle,fill,inner sep=1] (x) at (1,0) [label=$\phi(x)$] {};
		\node[circle,fill,inner sep=1] (y) at (5,.5) [label=$\phi(y)$] {};
		\node[circle,fill,inner sep=1] (z) at (9,0) [label=$\phi(z)$] {};
		
		\node at (-3.5,0) [label=right:$1. (i)$] {};
		\draw (x)--(y)--(z);
	\end{scope}

	\begin{scope}[xshift=24cm,yshift=-2cm]
		\node[circle,fill,inner sep=1] (y) at (1,0) [label=$\phi(y)$] {};
		\node[circle,fill,inner sep=1] (x) at (5,.5) [label=$\phi(x)$] {};
		\node[circle,fill,inner sep=1] (z) at (9,0) [label=$\phi(z)$] {};
		
		\node at (-3.5,0) [label=right:$1. (ii)$] {};
		\draw (y)--(x)--(z);
		
	\end{scope}
	
	\begin{scope}[xshift=28cm,yshift=-7cm]
		\node[circle,fill,inner sep=1] (alpha) at (1,0) [label=right:$\phi(\alpha)$] {};
		\node[circle,fill,inner sep=1] (x) at (-1,2) [label=$\phi(x)$] {};
		\node[circle,fill,inner sep=1] (y) at (3,2) [label=$\phi(y)$] {};
		\node[circle,fill,inner sep=1] (z) at (1,-2) [label=below:$\phi(z)$] {};
		
		\node at (-7.5,0) [label=right:$2. (i)$] {};
		\draw (x)--(alpha)--(z); \draw (alpha)--(y);
		
	\end{scope}
	\begin{scope}[xshift=25.5cm,yshift=-12cm]
		\node[circle,fill,inner sep=1] (x) at (1.5,0) [label=left:$\phi(x)$] {};
		\node[circle,fill,inner sep=1] (alpha) at (5.5,0) [label=right:$\phi(\alpha)$] {};
		\node[circle,fill,inner sep=1] (y) at (5.5,-4) [label=right:$\phi(y)$] {};
		\node[circle,fill,inner sep=1] (z) at (1.5,-4) [label=left:$\phi(z)$] {};
		
		\node at (-3.5,-2) [label=$2. (ii)$] {};
		\draw (x)--(alpha)--(y)--(z)--(x);
	\end{scope}
	\node[white] () at (3,-20) [label=(a)] {};	
	\node[white] () at (30,-20) [label=(b)] {};
\end{tikzpicture}
\caption{(a) A $2$-caterpillar with three fixed vertices $x,y$ and $z$ on its first leg and (b) An embedding $\phi$ mapping $x,y$ and $z$ into some fixed patterns in $\mathcal{Q}_n$.}
\label{fig5}
\end{figure}

\begin{enumerate}
\itemsep0.5em
\item[I.] \textit{Partition of the $2$-caterpillar:} $C$ is partitioned into at most four subtrees, say $X,X_2,Y_2$ and $Z_2$, such that if $\{(x_1,x_2),(y_1,y_2),(z_1,z_2)\}$ were the edges deleted then, as seen in Fig.~\ref{fig6},
\begin{enumerate}
	\item[(a)] $(x_1,x_2)$ lie on the backbone of $C$, $(y_1,y_2)$ lie on the backbone of the leg $C^{x_1}$ and $(z_1,z_2)$ lie on the leg $C^{x_1,y_1}$, and
	\item[(b)] $x_1,y_1$ and $z_1$ are contained in $X$ and $x_2,y_2$ and $z_2$ are contained in $X_2,Y_2$ and $Z_2$, respectively,
\end{enumerate}
such that $X$ is perfectly balanced $2$-caterpillar of order $2$. The remaining subtrees, i.e. $X_2,Y_2$ and $Z_2$, are joined by some new edges to form a perfectly balanced $2$-caterpillar, say $Y$, of order $2^n$, such that $\{x_2,y_2,z_2\}$ lie on one of the fixed patterns.

\item[II.] \textit{Extension of embeddings:} By the induction hypothesis, there exists an embedding $\phi_1:X\rightarrow \mathcal{Q}_n$ such that $\phi_1(\{x_1,y_1,z_1\})$ lie on one of the fixed patterns satisfying one of the four conditions, as mentioned in the theorem. By construction, $\{x_2,y_2,z_2\}$ lie on one of the fixed pattern, so any embedding $\phi_2:Y\rightarrow \mathcal{Q}_n$, which exists by the induction hypothesis, will preserve the pattern. By Lemma~\ref{lem:choudum2016}, there exists automorphisms $\pi_1$ and $\pi_2$ on $\mathcal{Q}_n$ such that 
$\pi_1\circ\phi_1(x_1)=\pi_2\circ\phi_2(x_2)$, $\pi_1\circ\phi_1(y_1)=\pi_2\circ\phi_2(y_2)$ and $\pi_1\circ\phi_1(z_1)=\pi_2\circ\phi_2(z_2)$. Define an embedding $\phi:C\rightarrow \mathcal{Q}_{n+1}$ by
\begin{equation}\label{eq:phi}
\phi(x)=\begin{cases} 0\pi_1\circ\phi_1(x);\; \mbox{ if } x\in X, \\ 1\pi_2\circ\phi_2(x);\; \mbox{ if } x\in Y. \end{cases}
\end{equation}
\end{enumerate}

\begin{figure}
		\centering
		
			\begin{tikzpicture}[scale=0.5]
				\node[circle,fill,draw,inner sep=1] (1) at (-1,0) [label=$1$] {};
				\node[circle,fill,draw,inner sep=1] (q-1) at (1,0) [label=$x_2$] {};
				\node[circle,fill,draw,inner sep=1] (q) at (2,0) [label=$x_1$] {};
				\node[circle,fill,draw,inner sep=1] (N) at (5,0) [label=$N$] {};
				\node[circle,fill,draw,inner sep=1] (y1) at (2,-3) [label=right:$y_1$] {};
				\node[circle,fill,draw,inner sep=1] (y2) at (2,-2) {};
				\node[circle,fill,draw,inner sep=1] (y3) at (1,-3) {};
				\node[circle,fill,draw,inner sep=1] (y4) at (2,-4) [label=below right:$y_2$] {};
				\node[circle,fill,draw,inner sep=1] (Nq) at (2,-6) [label=below:$N^{x_1}$] {};
				\node[circle,fill,draw,inner sep=1] (z1) at (-1,-3) [label=below:$z_1$] {};
				\node[circle,fill,draw,inner sep=1] (z2) at (-2,-3) [label=below:$z_2$] {};
				\node[circle,fill,draw,inner sep=1] (z3) at (-4,-3) {};
				\node[circle,fill,draw,inner sep=1] (x3) at (1,-2) [label=below:$x'$] {};
				
				\draw[dashed] (1)--(q-1); \draw (q-1) to node {/} (q); \draw[dashed] (q) -- (N);
				\draw[dashed] (q) -- (y2); \draw[dashed] (y3) -- (z1); \draw[dashed] (z2) -- (z3);
				\draw (z1) to node {/} (z2); \draw[dashed] (y4) -- (Nq);
				\draw (y1) -- (y2); \draw (y1) -- (y3); \draw (y1) to node {---} (y4);
				
				\draw[snake it] (1) -- (-1,-2); \draw[snake it] (q-1) -- (x3); \draw[snake it] (N) -- (5,-2);
				\draw[snake] (y2) -- (4,-2); \draw[snake] (y4) -- (4,-4); \draw[snake] (Nq) -- (4,-6);
			\end{tikzpicture}		
			\caption{Partition of a perfectly balanced $2$-caterpillar.}
			\label{fig6}
		\end{figure}
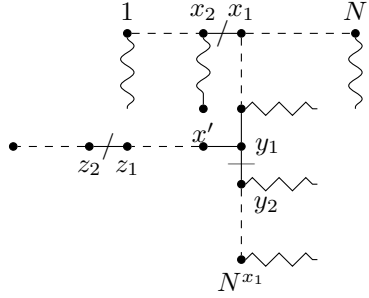
It then follows that $\phi$ is an embedding and $(\phi(x_1),\phi(x_2))$, $ (\phi(y_1),\phi(y_2))$ and $(\phi(z_1),\phi(z_2))$ form edges in $\mathcal{Q}_{n+1}$. Thus, once a partition, $\{X,Y\}$ of $C$, is obtained, embeddings $\phi_1$ and $\phi_2$ exist by the induction hypothesis. So, we only need to show that $C$ can be partitioned as required in step I. 

We adopt some notations to be used in the proof. The order of a graph $G$ is denoted by $o(G)$. A subgraph induced by $X$, where $X$ is a subgraph of $G$, is denoted by $\langle X\rangle$. Recall that $f_B(y)$ is the order of the first $l$ legs of a $k$-caterpillar with a backbone $B$ of $N$ vertices, where $y$ is the $l$-th backbone vertex. We discuss the proof in cases, as follows. 

By Proposition~\ref{prop:orderpartition}, there exists an integer q, with $1<q<N$, such that $f_B(q)\geq 2^n$ and $f_B(q-1)<2^n$. As seen in Fig.~\ref{fig6}, the value of $f_B(q)$ determines the following three cases.
\begin{enumerate}
\itemsep0.5em
\item[1.] If $f_B(q)=2^n$, then $(q,q+1)\notin M$. Put $x_1=q+1$ and $x_2=q$. Delete $(x_1,x_2)$ to get $X=\langle C^{x_1},\dots,C^N\rangle$ and $Y=\langle C^1,\dots,C^{x_2}\rangle$. By the induction hypothesis, $X$ and $Y$ can be embedded into $\mathcal{Q}_n$ via maps $\phi_1$ and $\phi_2$, respectively. By vertex-symmetry of $\mathcal{Q}_n$, we get $\phi_1(x_1)=\phi_2(x_2)$. Hence, the extended embedding $\phi$, as defined in Eq.~\ref{eq:phi}, maps $\{x_1,x_2\}$ into an edge $(\phi(x_1),\phi(x_2))$ of $\mathcal{Q}_{n+1}$.

\item[2.] If $f_B(q)=2^n+1$, then $(q,q+1)\in M$. Put $x_1=q$ and $x_2=q+1$. Let $C^{x_1,y_1}$ be the first path of odd length from $x_1$, on the leg $C^{x_1}$, and $(z_1,z_2)$ be the last edge on this path (see Fig.~\ref{fig7}). Then, $[x_1,z_1]$-path is of odd length. Delete $(x_1,x_2)$ and $(z_1,z_2)$ to  obtain $X=\langle C^1,\dots,C^{q-1},C^{x_1}\backslash z_2\rangle$. All the matching edges edges along the $[x_1,z_1]$-path in $C$  become non-matching edges in $X$ and vice-versa. Add $(x_2,z_2)$, which becomes a new matching edge, to obtain $Y=\langle (x_2,z_2),C^{x_2},\dots,C^N\rangle$. By the induction hypothesis $1.(ii)$, we get $(\phi_1(x_1),\phi_1(z_1))$ as an edge in $\mathcal{Q}_n$. By construction $(x_2,z_2)$ is an edge in $Y$, so $(\phi_2(x_2),\phi_2(z_2))$ is an edge in $\mathcal{Q}_n$. By edge-symmetry of $\mathcal{Q}_n$, we get $\phi_1(x_1)=\phi_2(x_2)$ and $\phi_1(z_1)=\phi_2(z_2)$. Thus, $(\phi(x_1),\phi(x_2))$ and $(\phi(z_1),\phi(z_2))$ form edges in $\mathcal{Q}_{n+1}$, via the map $\phi$.

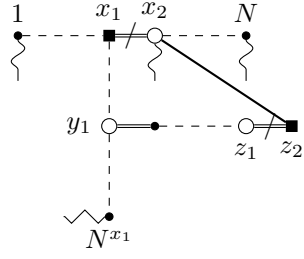
\begin{figure}
\centering
    \begin{tikzpicture}[scale=.6]
    	\node[circle,fill,draw,inner sep=1] (1) at (0,0) [label=$1$] {};
    	\node[fill,draw,inner sep=2] (q) at (2,0) [label=$x_1$] {};
    	\node[circle,draw,inner sep=2] (q+1) at (3,0) [label=$x_2$] {};
    	\node[circle,fill,draw,inner sep=1] (N) at (5,0) [label=$N$] {};
    	\node[circle,draw,inner sep=2] (y1) at (2,-2) [label=left:$y_1$] {};
    	\node[circle,fill,draw,inner sep=1] (Nq) at (2,-4) [label=below:$N^{x_1}$] {};
    	\node[circle,fill,draw,inner sep=1] (y2) at (3,-2) {};
    	\node[circle,draw,inner sep=2] (z1) at (5,-2) [label=below:$z_1$] {};
    	\node[fill,draw,inner sep=2] (z2) at (6,-2) [label=below:$z_2$] {};
    	
    	\draw[dashed] (1)--(q); \draw[double] (q) to node {/} (q+1); \draw[dashed] (q+1) -- (N);
    	\draw[dashed] (q) -- (y1); \draw[double] (y1) -- (y2); \draw[dashed] (y2) -- (z1);
    	\draw[double] (z1) to node {/} (z2); \draw[dashed] (y1) -- (Nq);
    	
    	\draw[snake it] (1) -- (0,-1); \draw[snake it] (q+1) -- (3,-1); \draw[snake it] (N) -- (5,-1); \draw[snake] (Nq) -- (1,-4);
    	
    	\draw[thick] (z2) edge (q+1);
    \end{tikzpicture}		
    \caption{Case 2. $f_B(q)=2^n+1$ and two matching edges are deleted.}
    \label{fig7}
\end{figure}

\item[3.] If $f_B(q)>2^n+1$ then put $x_1=q$ and $x_2=q-1$. Without loss of generality, assume $(x_1,x_2)$ is a non-matching edge, by Proposition~\ref{prop:notunique}. Delete $(x_1,x_2)$ to get a part $X_2=\langle C^1,\dots,C^{x_2}\rangle$. This case is further divided into two subcases 3.1 and 3.2.
		
\item[3.1.] If $\exists (y_1,y_2)$, with $y_1\neq x_1$, on the backbone $B^{x_1}$ of $C^{x_1}$, such that 
\[f_B(x_2)+f_{B^{x_1}}(N^{x_1})-f_{B^{x_1}}(y_1)=2^n,\] 
then delete $(y_1,y_2)$ to get $Y_2=\langle C^{x_1,y_2},\dots,C^{x_1,N^{x_1}}\rangle$. The remaining part is $X=\langle C\backslash X_2\cup Y_2\rangle$. To join the parts $X_2$ and $Y_2$, we add a new edge, as described in the following sub-cases 3.1.1 and 3.1.2.
			
\item[3.1.1.] If $[x_1,y_1]$-path is of odd length, then add the edge $(x_2,y_2)$ to get second part $Y=\langle X_2,(x_2,y_2),Y_2 \rangle$, as shown in Fig.~\ref{fig8}. By the induction hypothesis $1$, $\{x_1,y_1\}$ and $\{x_2,y_2\}$ are mapped to an edge in $\mathcal{Q}_n$ via maps $\phi_1$ and $\phi_2$, respectively. By edge-symmetry of $\mathcal{Q}_n$, we get $\phi_1(x_1)=\phi_2(x_2)$ and $\phi_1(y_1)=\phi_2(y_2)$. Thus, $(\phi(x_1),\phi(x_2))$ and $(\phi(y_1),\phi(y_2))$ are edges in $\mathcal{Q}_{n+1}$.
\begin{figure}
\centering
	\begin{tikzpicture}[scale=0.6]
		\node[circle,fill,draw,inner sep=1] (1) at (-1,0) [label=$1$] {};
		\node[fill,draw,inner sep=2] (q-1) at (1,0) [label=$x_2$] {};
		\node[circle,draw,inner sep=2] (q) at (2,0) [label=$x_1$] {};
		\node[circle,fill,draw,inner sep=1] (N) at (5,0) [label=$N$] {};
		\node[fill,draw,inner sep=2] (y1) at (2,-3) [label=above right:$y_1$] {};
		\node[circle,draw,inner sep=2] (y4) at (2,-4) [label=right:$y_2$] {};
		\node[circle,fill,draw,inner sep=1] (Nq) at (2,-6) [label=below:$N^{x_1}$] {};
		\node[circle,fill,draw,inner sep=1] (z3) at (6,-3) {};
		
		\draw[dashed] (1)--(q-1); \draw (q-1) to node {/} (q); \draw[dashed] (q) -- (N);
		\draw[dashed] (q) -- (y1);  \draw[dashed] (y1) -- (z3);
		 \draw[dashed] (y4) -- (Nq);
		\draw (y1) to node {---} (y4); 
		
		\draw[thick] (q-1) edge[bend right=40] (y4);
		
		\draw[snake it] (1) -- (-1,-2); \draw[snake it] (q-1) -- (1,-2); \draw[snake it] (N) -- (5,-2); \draw[snake] (Nq) -- (0,-6); \draw[snake] (y4) -- (0,-4);
	\end{tikzpicture}		
	\caption{Case 3.1.1. Two non-matching edges are deleted and one new edge is added to construct two perfectly balanced $2$-caterpillars, each of order $2^n$.}
	\label{fig8}
\end{figure}
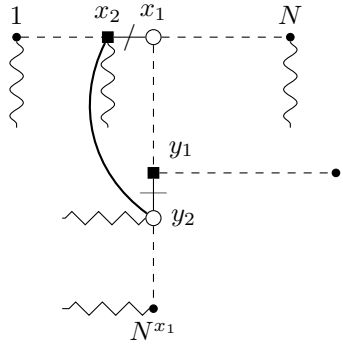

\item[3.1.2.] If $[x_1,y_1]$-path is of even length, then by Proposition \ref{prop:1cat_evenodd} $[y_2,N^{x_1}]$-path is of odd length. By the induction hypothesis $2$, $\{x_1,y_1\}$ is mapped to end vertices of a path $P_3$ in $\mathcal{Q}_n$. As seen in Fig.~\ref{fig9}, we further have two subcases.
\begin{itemize}
    \item[(i)] If $o(C^{x_2})=1$ then add edge $(x_2,N^{x_1})$ to get $Y=\langle X_2,(x_2,N^{x_1}),Y_2\rangle$.
    \item[(ii)] If $o(C^{x_2})>1$ then $[x_2,N^{x_2}]$-path is of odd length (by Proposition \ref{prop:1cat_evenodd}). Add $(N^{x_2},y_2)$ to get $Y=\langle X_2, (N^{x_2},y_2) ,Y_2 \rangle$. 
\end{itemize}
In both the subcases, by the induction hypothesis $2$, $\{x_2,y_2\}$ is mapped to end vertices of a path $P_3$ in $\mathcal{Q}_n$, via map $\phi_2$. By $P_3$-symmetry of $\mathcal{Q}_n$, we see that $\phi_1(x_1)=\phi_2(x_2)$ and $\phi_1(y_1)=\phi_2(y_2)$. Thus, the extended map $\phi$ form edges $(\phi(x_1),\phi(x_2))$ and $(\phi(y_1),\phi(y_2))$ in $\mathcal{Q}_{n+1}$.
	
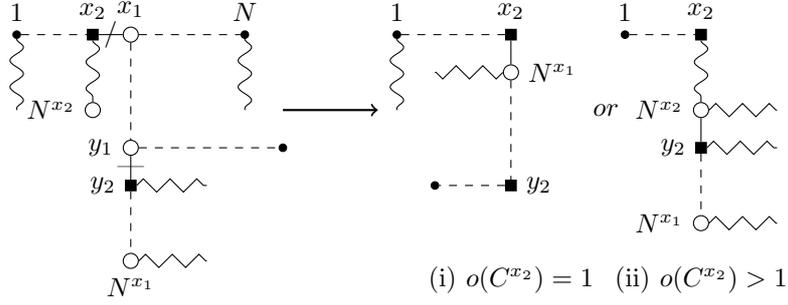
\begin{figure}
\centering
	\begin{tikzpicture}[scale=0.5]
	\begin{scope}[xshift=-5cm]
		\node[circle,fill,draw,inner sep=1] (1) at (-1,0) [label=$1$] {};
		\node[fill,draw,inner sep=2] (q-1) at (1,0) [label=$x_2$] {};
		\node[circle,draw,inner sep=2] (Nx2) at (1,-2) [label=left:$N^{x_2}$] {};
		\node[circle,draw,inner sep=2] (q) at (2,0) [label=$x_1$] {};
		\node[circle,fill,draw,inner sep=1] (N) at (5,0) [label=$N$] {};
		\node[circle,draw,inner sep=2] (y1) at (2,-3) [label=left:$y_1$] {};
		\node[fill,draw,inner sep=2] (y4) at (2,-4) [label=left:$y_2$] {};
		\node[circle,draw,inner sep=2] (Nq) at (2,-6) [label=below:$N^{x_1}$] {};
		\node[circle,fill,draw,inner sep=1] (z3) at (6,-3) {};
		
		\draw[dashed] (1)--(q-1); \draw (q-1) to node {/} (q); \draw[dashed] (q) -- (N);
		\draw[dashed] (q) -- (y1);  \draw[dashed] (y1) -- (z3);
		 \draw[dashed] (y4) -- (Nq);
		\draw (y1) to node {---} (y4); 
		
		
		\draw[snake it] (1) -- (-1,-2);\draw[snake it] (q-1) -- (Nx2); \draw[snake it] (N) -- (5,-2); \draw[snake] (y4) -- (4,-4); \draw[snake] (Nq) -- (4,-6);
	\end{scope}
		
	\begin{scope}[xshift=5cm]
		\node[circle,fill,draw,inner sep=1] (1) at (-1,0) [label=$1$] {};
		\node[fill,draw,inner sep=2] (x2) at (2,0) [label=$x_2$] {};
		\node[circle,draw,inner sep=2] (Nq) at (2,-1) [label=right:$N^{x_1}$] {};
		\node[fill,draw,inner sep=2] (y2) at (2,-4) [label=right:$y_2$] {};
		\node[circle,fill,draw,inner sep=1] (y3) at (0,-4) {};
		
		\node () at (2,-6.5) {(i) $o(C^{x_2})=1$};
		
		\draw[dashed] (1) -- (x2); \draw (x2) -- (Nq); \draw [dashed](Nq) -- (y2);
		\draw[dashed] (y2) -- (y3);
		
		\draw[snake it] (1) -- (-1,-2); \draw[snake] (Nq) -- (0,-1);
		
		\draw[->,thick] (-4,-2) -- (-1.5,-2);
	\end{scope}
	
	\begin{scope}[xshift=10cm]
		\node[circle,fill,draw,inner sep=1] (1) at (0,0) [label=$1$] {};
		\node[fill,draw,inner sep=2] (x2) at (2,0) [label=$x_2$] {};
		\node[circle,draw,inner sep=2] (Nx2) at (2,-2) [label=left:$N^{x_2}$] {};
		\node[fill,draw,inner sep=2] (y2) at (2,-3) [label=left:$y_2$] {};
		\node[circle,draw,inner sep=2] (Nx1) at (2,-5) [label=left:$N^{x_1}$] {};
		
		\node () at (-.5,-2) {$or$};
		
		\draw[dashed] (1) -- (x2); \draw[snake it] (x2) -- (Nx2); \draw (Nx2) -- (y2);
		\draw[dashed] (y2) -- (Nx1); \draw[snake] (y2) -- (4,-3); \draw[snake] (Nx1) -- (4,-5);
		\draw[snake] (Nx2) -- (4,-2);
		\node () at (2,-6.5) {(ii) $o(C^{x_2})>1$};
	\end{scope}
	\end{tikzpicture}	
	\caption{Case 3.1.2.}	
	\label{fig9}
\end{figure}
		
\item[3.2.] If $\exists$ $(y_1,y_2)$ on the backbone of $C^{x_1}$ and $(z_1,z_2)$ on $C^{x_1,y_1}$ such that \[f_B(x_2)+(f_{N^{x_1}}(N^{x_1})-f_{N^{x_1}}(y_1))+(f_{N^{x_1,y_1}}(N^{x_1,y_1})-f_{N^{x_1,y_1}}(z_1))=2^n,\] then these two edges  are unique. Delete $(y_1,y_2)$ and $(z_1,z_2)$ to get the parts $Y_2=\langle C^{x_1,y_2},\dots,C^{x_1,N^{x_1}}\rangle$ and $Z_2=[z_2,N^{x_1,y_1}]$-path. The first part obtained is $X=\langle C\backslash X_2\cup Y_2\cup Z_2\rangle$. Since the degree of $y_1$ is $3$, so there are three possible matching edges it can be incident to, as discussed below.
			
\item[3.2.1.] If $(y_1,y_2) \in  M$ then $(z_1,z_2) \in  M$ and $[y_1,z_1]$-path is of odd length. Add the new matching edge $(y_2,z_2)$ (see Fig.~\ref{fig10}). The non-matching edges along the $[y_1,z_1]$-path become matching edges and vice-versa. Furthermore, 
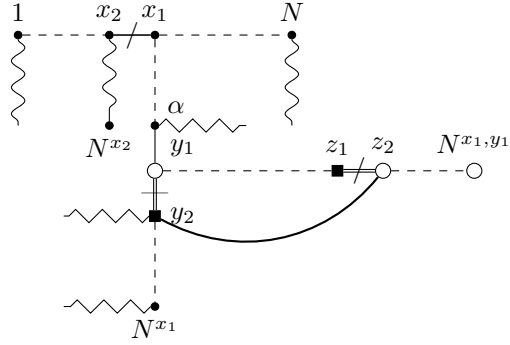
\begin{figure}
\centering
	\begin{tikzpicture}[scale=.6]
		\node[circle,fill,draw,inner sep=1] (1) at (-1,0) [label=$1$] {};
		\node[circle,fill,draw,inner sep=1] (x2) at (1,0) [label=$x_2$] {};
		\node[circle,fill,draw,inner sep=1] (x1) at (2,0) [label=$x_1$] {};
		\node[circle,fill,draw,inner sep=1] (x3) at (1,-2) [label=below:$N^{x_2}$] {};
		\node[circle,fill,draw,inner sep=1] (N) at (5,0) [label=$N$] {};
		\node[circle,draw,inner sep=2] (y1) at (2,-3) [label=above right:$y_1$] {};
		\node[fill,draw,inner sep=2] (y2) at (2,-4) [label=right:$y_2$] {};
		\node[fill,circle,draw,inner sep=1] (Nq) at (2,-6) [label=below:$N^{x_1}$] {};
		\node[fill,draw,inner sep=2] (z1) at (6,-3) [label=$z_1$] {};
		\node[circle,draw,inner sep=2] (z2) at (7,-3) [label=$z_2$]{};
		\node[circle,draw,inner sep=2] (z3) at (9,-3) [label=$N^{x_1,y_1}$] {};
		\node[circle,fill,draw,inner sep=1] (alpha) at (2,-2) [label=above right:$\alpha$] {};
		
		\draw[dashed] (1) -- (x2); \draw (x2) -- (x1); \draw[dashed] (x1) -- (N);
		\draw[dashed] (x1) -- (alpha); \draw (alpha) -- (y1); \draw[double] (y1) to node {---} (y2); \draw[dashed] (y2) -- (Nq); \draw (x2) to node {/} (x1);
		\draw[dashed] (y1) -- (z1); \draw[double] (z1) to node {/} (z2); \draw[dashed] (z2) -- (z3);

		\draw[snake it] (1) -- (-1,-2); \draw[snake it] (x2) -- (x3); \draw[snake it] (N) -- (5,-2);
		\draw[snake] (Nq) -- (0,-6);\draw[snake] (y2) -- (0,-4); \draw[snake] (alpha) -- (4,-2);
		
		\draw[thick] (y2) edge[bend right=40] (z2); 
    \end{tikzpicture}
	\caption{Case 3.2.1. Two non-matching edges $(y_1,y_2)$ and $(z_1,z_2)$ are deleted and compensated by adding one matching edge $(y_2,z_2)$.}
	\label{fig10}
\end{figure}

\item[$(i)$] if $[x_1,y_1]$-path is of even length, add $(x_2,z_2)$ to get the required $2$- caterpillar $Y=\langle X_2,(x_2,z_2),Z_2,(y_2,z_2),Y_2\rangle$ (Fig.~\ref{fig11}). Since the sequence $[x_2,z_2,y_2]$ is $P_3$, so its image $[\phi_2(x_2),\phi_2(z_2),\phi_2(y_2)]$ is $P_3$ in $\mathcal{Q}_n$. By induction hypothesis $2 (ii)$, the sequence $[\phi_1(x_1),\phi_1(z_1),\phi_1(y_1)]$ form $P_3$ in $\mathcal{Q}_n$. By $P_3$-symmetry of $\mathcal{Q}_n$, we get $\phi_1(x_1)=\phi_2(x_2)$, $\phi_1(y_1)=\phi_2(y_2)$ and $\phi_1(z_1)=\phi_2(z_2)$. Thus, the extended map $phi$ form edges $(\phi(x_1),\phi(x_2))$, $(\phi(y_1),\phi(y_2))$ and $(\phi(z_1),\phi(z_2))$ in $\mathcal{Q}_{n+1}$, as required.
				
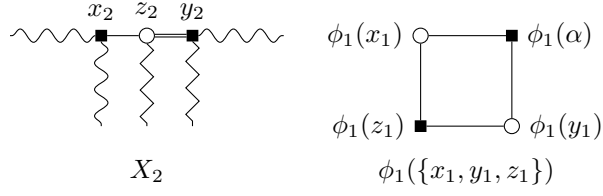
\begin{figure}
\centering

\begin{tikzpicture}[scale=.6]
	\node[fill,draw,inner sep=2] (x2) at (-5,0) [label=$x_2$] {};
	\node[circle,draw,inner sep=2] (z2) at (-4,0) [label=$z_2$] {};
	\node[fill,draw,inner sep=2] (y2) at (-3,0) [label=$y_2$] {};
	
	\node () at (-4,-3) {$X_2$};
	
	\draw[snake it] (x2) -- (-7,0); \draw[snake it] (x2) -- (-5,-2); \draw (x2) -- (z2); \draw[double] (z2) -- (y2); \draw[snake] (y2) -- (-3,-2); \draw[snake it] (y2) -- (-1,0); \draw[snake] (z2) -- (-4,-2);
	
	\node[circle,draw,inner sep=2] (x1) at (2,0) [label=left:$\phi_1(x_1)$] {};
	\node[fill,draw,inner sep=2] (alpha) at (4,0) [label=right:$\phi_1(\alpha)$] {};
	\node[fill,draw,inner sep=2] (z1) at (2,-2) [label=left:$\phi_1(z_1)$] {};
	\node[circle,draw, inner sep=2] (y1) at (4,-2) [label=right:$\phi_1(y_1)$] {};
	
	\node () at (3,-3) {$\phi_1(\{x_1,y_1,z_1\})$};
	
	\draw (x1) -- (alpha) -- (y1) -- (z1) -- (x1);
\end{tikzpicture}
\caption{Case 3.2.1. (i) $\phi_2(\{x_2,z_2,y_2\})$ and $\phi_1(\{x_1,z_1,y_1\})$ are both $K_{1,2}$.}
\label{fig11}
\end{figure}

\item[$(ii)$] if $[x_1,y_1]$-path is of odd length, then, as seen in Fig.~\ref{fig12},
\begin{enumerate}
	\item[$\bullet$] if $o(C^{x_2})>1$ then $[x_2,N^{x_2}]$-path is of odd length (by Proposition \ref{prop:1cat_evenodd}). Add $(N^{x_2},N^{x_1,y_1})$ to get $Y=\langle X_2,(N^{x_2},N^{x_1,y_1}),Z_2,(y_2,z_2),Y_2\rangle$. Here, it is possible that $z_2=N^{x_1y_1}$. 
						
	\item[$\bullet$] if $o(C^{x_2})=1$ then $[y_1,N^{x_1}]$-path is of odd length (by Proposition \ref{prop:1cat_evenodd}). Add $(x_2,N^{x_1})$ to get $Y=\langle X_2,(x_2,N^{x_1}),Y_2,(y_2,z_2),Z_2\rangle$. Here, $y_2=N^{x_1}$ is possible.
\end{enumerate}	
In both the cases, by the induction hypothesis $2$ and since $(y_2,z_2)$ is an edge, the sequence $[\phi_2(x_2),\phi_2(y_2),\phi_2(z_2)]$ form $P_3$ in $\mathcal{Q}_n$. By the induction hypothesis $1 (i)$, the sequence $[\phi_1(x_1),\phi_1(y_1),\phi_1(z_1)]$ form $P_3$ in $\mathcal{Q}_n$. By $P_3$-symmetry of $\mathcal{Q}_n$, we get $\phi_1(x_1)=\phi_2(x_2)$, $\phi_1(y_1)=\phi_2(y_2)$ and $\phi_1(z_1)=\phi_2(z_2)$. Thus, the extended map $phi$ form edges $(\phi(x_1),\phi(x_2))$, $(\phi(y_1),\phi(y_2))$ and $(\phi(z_1),\phi(z_2))$ in $\mathcal{Q}_{n+1}$, as required.

\begin{figure}
\centering
\begin{tikzpicture}[scale=.6]
\begin{scope}
	\node[circle,draw,inner sep=2] (x2) at (0,0) [label=$x_2$] {};
	\node[fill,draw,inner sep=2] (Nx2) at (0,-2) [label=right:$N^{x_2}$] {};
	\node[circle,draw,inner sep=2] (Nqy1) at (0,-3) [label=right:$N^{x_1y_1}$] {};
	\node[circle,draw,inner sep=2] (z2) at (0,-5) [label=right:$z_2$] {};
	\node[fill,draw,inner sep=2] (y2) at (0,-6) [label=right:$y_2$] {};
	
	\node () at (0,-8.5) {$C^{x_2}>1$};
	
	\draw[dashed] (x2) -- (Nx2); \draw (Nx2)--(Nqy1); \draw[dashed] (Nqy1) -- (z2); \draw[double] (z2)-- (y2);
	 \draw[snake it] (x2) -- (-2,0); \draw[snake] (Nx2) -- (-2,-2); \draw[snake] (y2) -- (-2,-6); \draw[snake it] (y2) -- (0,-8);
\end{scope}

\begin{scope}[xshift=8cm]
	 \node[circle,draw,inner sep=2] (x2) at (0,0) [label=$x_2$] {};
	\node[fill,draw,inner sep=2] (Nq) at (0,-1) [label=right:$N^{x_1}$] {};
	\node[fill,draw,inner sep=2] (y2) at (0,-3) [label=right:$y_2$] {};
	\node[circle,draw,inner sep=2] (z2) at (0,-4) [label=right:$z_2$] {};
	
	\node () at (0,-5.5) {$C^{x_2}=1$};
	
	\draw (x2) -- (Nq); \draw[dashed] (Nq)--(y2); \draw[double] (y2) -- (z2); 
	
	\draw[snake it] (x2) -- (-2,0); \draw[snake] (Nq) -- (-2,-1); \draw[snake] (y2) -- (-2,-3); \draw[snake] (z2) -- (-2,-4);
\end{scope}
\end{tikzpicture}
\caption{Case 3.2.1. (ii).}
\label{fig12}
\end{figure}
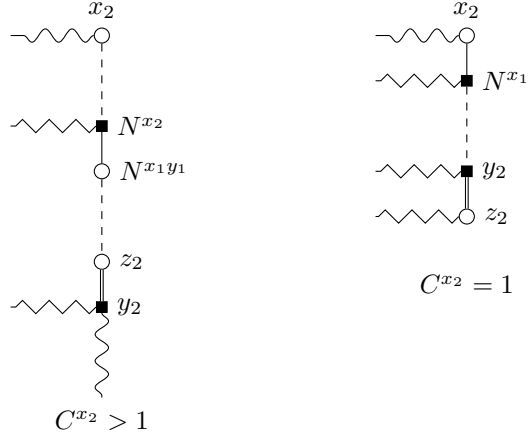								
	
\item[3.2.2.] If $(y_1,y')$ is a matching edge on $C^{x_1,y_1}$ then  $[y_1,z_1]$-path is of odd length. Also $[y_2,N^{x_1}]$-path is of odd length (by Proposition \ref{prop:1cat_evenodd}). Add $(y_2,z_2)$. As seen in Fig.~\ref{fig13},
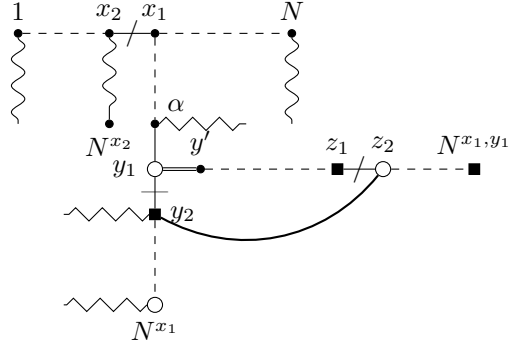
\begin{figure}
\centering

\begin{tikzpicture}[scale=.6]
	\node[circle,fill,draw,inner sep=1] (1) at (-1,0) [label=$1$] {};
		\node[circle,fill,draw,inner sep=1] (x2) at (1,0) [label=$x_2$] {};
		\node[circle,fill,draw,inner sep=1] (x3) at (1,-2) [label=below:$N^{x_2}$] {};
		\node[circle,fill,draw,inner sep=1] (x1) at (2,0) [label=$x_1$] {};
		\node[circle,fill,draw,inner sep=1] (N) at (5,0) [label=$N$] {};
		\node[circle,draw,inner sep=2] (y1) at (2,-3) [label=left:$y_1$] {};
		\node[fill,draw,inner sep=2] (y2) at (2,-4) [label=right:$y_2$] {};
		\node[circle,fill,draw,inner sep=1] (y') at (3,-3) [label=$y'$] {};
		\node[circle,draw,inner sep=2] (Nq) at (2,-6) [label=below:$N^{x_1}$] {};
		\node[fill,draw,inner sep=2] (z1) at (6,-3) [label=$z_1$] {};
		\node[circle,draw,inner sep=2] (z2) at (7,-3) [label=$z_2$]{};
		\node[fill,draw,inner sep=2] (z3) at (9,-3) [label=$N^{x_1,y_1}$] {};
		\node[circle,fill,draw,inner sep=1] (alpha) at (2,-2) [label=above right:$\alpha$] {};
		
		\draw[dashed] (1) -- (x2); \draw (x2) to node {/} (x1); \draw[dashed] (x1) -- (N);
		\draw[dashed] (x1) -- (alpha); \draw (alpha) -- (y1); \draw (y1) to node {---} (y2); \draw[dashed] (y2) -- (Nq);
		\draw[dashed] (y') -- (z1); \draw (z1) to node {/} (z2); \draw[dashed] (z2) -- (z3);
		\draw[double] (y1) -- (y');
		
		\draw[snake it] (1) -- (-1,-2); \draw[snake it] (x2) -- (x3); \draw[snake it] (N) -- (5,-2);
		\draw[snake] (y2) -- (0,-4); \draw[snake] (alpha) -- (4,-2); \draw[snake] (Nq) -- (0,-6);
		
		\draw[thick] (y2) edge[bend right=40] (z2);
\end{tikzpicture}
\caption{Case 3.2.2.}
\label{fig13}
\end{figure}

\item[(i)] If $[x_1,y_1]$-path is of even length then add $(x_2,z_2)$ to get the second part $Y=\langle X_2,(x_2,z_2),Z_2,(y_2,z_2),Y_2\rangle$.
\item[(ii)] If $[x_1,y_1]$-path is of odd length then,
	\begin{enumerate}
		\item[$\bullet$] if $s(C^{x_2})>1$ then $[x_2,N^{x_2}]$-path is of odd length (by Proposition \ref{prop:1cat_evenodd}). Add $(N^{x_2},N^{x_1})$ to get $Y=\langle X_2,(N^{x_2},N^{x_1}),Y_2,(y_2,z_2),Z_2 \rangle$.  
		\item[$\bullet$] if $s(C^{x_2})=1$ and since $[z_2,N^{x_1,y_1}]$-path is of odd length, $(x_2,N^{x_1,y_1})$ is added to get $Y=\langle X_2,(x_2,N^{x_1,y_1}),Y_2,(y_2,z_2),Z_2\rangle$.
    \end{enumerate} 
	
\item[3.2.3.] If $(y',y_1)\in  M$ on the backbone of $C^{x_1}$, with $y'\neq y_2$, then $[y_1,z_1]$-path and $[z_2,N^{x_1,y_1}]$-path are both of even length. Add $(y_2,N^{x_1,y_1})$ (see Fig~\ref{fig14}). We further discuss two sub-cases.

\begin{figure}
\centering
	\begin{tikzpicture}[scale=.6]
		\node[circle,fill,draw,inner sep=1] (1) at (-1,0) [label=$1$] {};
			\node[circle,fill,draw,inner sep=1] (x2) at (1,0) [label=$x_2$] {};
			\node[circle,fill,draw,inner sep=1] (x3) at (1,-2) [label=below:$N^{x_2}$] {};
			\node[circle,fill,draw,inner sep=1] (x1) at (2,0) [label=$x_1$] {};
			\node[circle,fill,draw,inner sep=1] (N) at (5,0) [label=$N$] {};
			\node[circle,draw,inner sep=2] (y1) at (2,-3) [label=above right:$y_1$] {};
			\node[fill,draw,inner sep=2] (y2) at (2,-4) [label=right:$y_2$] {};
			\node[circle,fill,draw,inner sep=1] (y') at (2,-2) [label=above right:$y'$] {};
			\node[circle,fill,draw,inner sep=1] (Nq) at (2,-6) [label=below:$N^{x_1}$] {};
			\node[circle,draw,inner sep=2] (z1) at (6,-3) [label=$z_1$] {};
			\node[fill,draw,inner sep=2] (z2) at (7,-3) [label=$z_2$]{};
			\node[circle,draw,inner sep=2] (z3) at (9,-3) [label=$N^{x_1,y_1}$] {};
			
			\draw[dashed] (1) -- (x2); \draw (x2) -- (x1); \draw[dashed] (x1) -- (N);
			\draw[dashed] (x1) -- (y'); \draw (y1) to node {---} (y2); \draw[dashed] (y2) -- (Nq);
			\draw[double] (y') -- (y1); \draw (z1) to node {/} (z2); \draw[dashed] (z2) -- (z3);
			\draw[dashed] (y1) -- (z1);
			
			\draw[snake it] (1) -- (-1,-2); \draw[snake it] (x2) -- (x3); \draw[snake it] (N) -- (5,-2);
			\draw[snake] (y2) -- (0,-4);\draw[snake] (y') -- (4,-2); \draw[snake] (Nq) -- (0,-6);
			
			\draw[thick] (y2) edge[bend right=40] (z3);
	\end{tikzpicture}
	\caption{Case 3.2.3.}
	\label{fig14}
\end{figure}
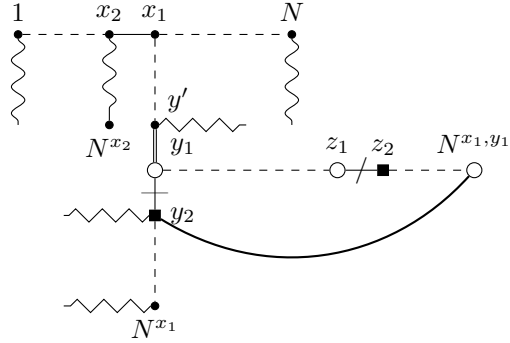
	
\item[(i)] If $[x_1,y_1]$-path is of even length then, as seen in the Fig.~\ref{fig15},
\begin{itemize}
\item[$\bullet$] if $o(C^{x_2})>1$ then $[x_2,N^{x_2}]$-path is of odd length (by Proposition \ref{prop:1cat_evenodd}). By Proposition \ref{prop:1cat_evenodd}, $[y_2,N^{x_1}]$-path is of even length. If $o(Y_2)>2$ then add $(N^{x_2},N^{x_1})$ to get   $Y=\langle X_2,(N^{x_2},N^{x_1}),Y_2,(y_2,N^{x_1,y_1}),Z_2\rangle$, otherwise add $(N^{x_2},y_2)$ to get $Y=\langle X_2,(N^{x_2},y_2),Y_2,(y_2,N^{x_1,y_1}),Z_2\rangle$.
\item[$\bullet$] if $o(C^{x_2})=1$ then add $(x_2,N^{x_1})$, since $[y_2,N^{x_1}]$-path is of odd length by Proposition \ref{prop:1cat_evenodd}, to get $Y=\langle X_2,(x_2,N^{x_1}),Y_2,(y_2,N^{x_1,y_1}),Z_2\rangle$.
\end{itemize}
		
\item[(ii)] If $[x_1,y_1]$-path is of odd length then, 
	\begin{itemize}
		\item[$\bullet$] If $o(C^{x_2})>1$ then $[x_2,N^{x_2}]$-path is of odd length (by Proposition \ref{prop:1cat_evenodd}). Add $(N^{x_2},N^{x_1})$ to get $Y=\langle X_2,(N^{x_2},N^{x_1}),Y_2,(y_2,N^{x_1,y_1}),Z_2\rangle$. 
		\item[$\bullet$] If $o(C^{x_2})=1$ then add $(x_2,z_2)$ to get $Y=\langle X_2,(x_2,z_2),Z_2,(y_2,N^{x_1,y_1}),\linebreak Y_2\rangle$. 
	\end{itemize}	
	
\begin{figure}
\centering
\begin{tikzpicture}[scale=.6]
	\begin{scope}
		\node[fill,draw,inner sep=2] (x2) at (0,3) [label=$x_2$] {};
		\node[circle,draw,inner sep=2] (Nx2) at (0,1) [label=right:$N^{x_2}$] {};
		\node[fill,draw,inner sep=2] (Nx1) at (0,0) [label=right:$N^{x_1}$] {};
		\node[fill,draw,inner sep=2] (y2) at (0,-2) [label=right:$y_2$] {};
		\node[circle,draw,inner sep=2] (Nx1y1) at (-1,-2) [label=$N^{x_1,y_1}$] {};
		\node[fill,draw,inner sep=2] (z2) at (-3,-2) [label=$z_2$] {};
		
		\node () at (0,-4.5) {$s(C^{x_2})>1$};
		
		\draw[snake it] (x2) -- (Nx2); \draw (Nx2) -- (Nx1); \draw[snake] (Nx2) -- (-2,1);
	    \draw (y2) -- (Nx1y1); \draw[snake it] (x2) -- (-3,3);
		\draw[dashed] (Nx1) -- (y2); \draw[dashed] (z2) -- (Nx1y1); \draw[snake] (Nx1) -- (-2,0);
		\draw[snake] (y2) -- (0,-4);
	\end{scope}
	\begin{scope}[xshift=6cm]
		\node[fill,draw,inner sep=2] (x2) at (0,1) [label=$x_2$] {};
		
		\node[circle,draw,inner sep=2] (Nx1) at (0,0) [label=right:$N^{x_1}$] {};
		\node[fill,draw,inner sep=2] (y2) at (0,-2) [label=right:$y_2$] {};
		\node[circle,draw,inner sep=2] (Nx1y1) at (-1,-2) [label=$N^{x_1,y_1}$] {};
		\node[fill,draw,inner sep=2] (z2) at (-3,-2) [label=$z_2$] {};
		
		\node () at (0,-4.5) {$s(C^{x_2})=1$};				
		
		\draw (x2) -- (Nx1); \draw (y2) -- (Nx1y1); \draw[snake it] (x2) -- (-3,1);
		\draw[dashed] (Nx1) -- (y2); \draw[dashed] (z2) -- (Nx1y1); \draw[snake] (Nx1) -- (-2,0);
		\draw[snake] (y2) -- (0,-4);
	\end{scope}
	\begin{scope}[xshift=12cm,yshift=0.5cm]
		\node[circle,draw,inner sep=2] (Nx1) at (0,0) [label=right:$\phi_2(\alpha)$] {};
		\node[fill,draw,inner sep=2] (x2) at (0,-1.4) [label=below:$\phi_2(x_2)$] {};
		\node[fill,draw,inner sep=2] (y2) at (-1,1) [label=$\phi_2(y_2)$] {};
		\node[fill,draw,inner sep=2] (z2) at (1,1) [label=$\phi_2(z_2)$] {};

		\draw (Nx1)--(x2); \draw (Nx1)--(y2);\draw (Nx1)--(z2); 
	\end{scope}
	\node () at (2,0) {\textbf{or}};
	\draw[->,thick] (6.5,0.5) -- (9,0.5);
\end{tikzpicture}		
\caption{Case 2.2.3. (i).}
\label{fig15}
\end{figure}
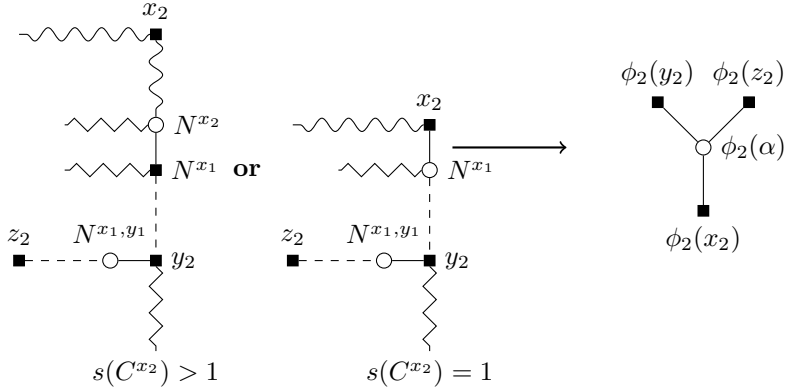				
\end{enumerate}
\end{proof}

\section{Conclusion}
As a next improvement of the result presented in this paper, we can consider embedding perfectly balanced $k$-caterpillars ($k\geq 3$). However, the proof technique of creating sub-caterpillars may not be appropriate for these caterpillars. This is because, we will have to then delete more than $3$ edges from the $k$-caterpillars to obtain a perfectly balanced sub-caterpillar. But then, there do not exist any more path symmetries in $\mathcal{Q}_n$ to embed the sub-caterpillar as desired. 

An adaption of this technique used in this paper to embed equibipartite $2$-caterpillars is promising and is being attempted.

\bibliographystyle{acm}
\bibliography{EmbedPerfectBalanced2CatIntoOptHypercube}
\end{document}